\theoremstyle{plain}
    \newtheorem{theorem}{Theorem}[section]
    \newtheorem{lemma}[theorem]{Lemma} 
    \newtheorem{corollary}[theorem]{Corollary}
    \newtheorem{proposition}[theorem]{Proposition}
    \newtheorem{assumption}[theorem]{Assumption}
    \theoremstyle{definition}
    \newtheorem{definition}[theorem]{Definition}
    \newtheorem{example}[theorem]{Example}
    \newtheorem{remark}[theorem]{Remark}
\theoremstyle{remark}
\numberwithin{equation}{section}
    \newcommand{\C}{\mathbb{C}} 
    \newcommand{\Z}{\mathbb{Z}}
        \newcommand{\kg}{\mathfrak{g}} 
\newcommand{\kh}{\mathfrak{h}} 
\newcommand{\kp}{\mathfrak{p}}
\newcommand{\kt}{\mathfrak{t}}
\newcommand{\kk}{\mathfrak{k}}
\newcommand{\XX}{\mathfrak{X}}
\newcommand{\cS}{\mathcal{S}}
\newcommand{\cH}{\mathcal{H}}
\newcommand{\cL}{\mathcal{L}}
\newcommand{\cF}{\mathcal{F}}
\newcommand{\cB}{\mathcal{B}}
\newcommand{\cSM}{\cS}
\newcommand{\DM}{D}
\newcommand{\LM}{L}
\newcommand{\vM}{v}
 \newcommand{\QcwR}{$[Q^{\Spinc},R]=0$ }
    \DeclareMathOperator{\Ad}{Ad}
         \DeclareMathOperator{\SO}{SO}
     \DeclareMathOperator{\Spin}{Spin}
    \DeclareMathOperator{\Spinc}{\Spin^c}
                \DeclareMathOperator{\End}{End}
     \DeclareMathOperator{\ind}{index}
     \DeclareMathOperator{\Tor}{Tor}
    \DeclareMathOperator{\ds}{ds}
      \DeclareMathOperator{\DInd}{D-Ind}
\DeclareMathOperator{\mat}{mat}
\DeclareMathOperator{\Hom}{Hom}
\DeclareMathOperator{\rank}{rank}
\DeclareMathOperator{\reg}{reg}
\newcommand{\beq}[1]{\begin{equation} \label{#1}}
\newcommand{\eeq}{\end{equation}}
\title{An equivariant index for proper actions III: the invariant and discrete series indices}
\author{Peter Hochs\footnote{University of Adelaide, \texttt{peter.hochs@adelaide.edu.au}}\hspace{1.5mm} and Yanli Song\footnote{University of Toronto, \texttt{songyanl@math.utoronto.ca}}}
\date{\today}
\begin{document}

\maketitle

\begin{abstract}
We study two special cases of the equivariant index defined in part I of this series. We apply this index to deformations of $\Spinc$-Dirac operators, invariant under actions by possibly noncompact groups, with possibly noncompact orbit spaces. One special case is an index defined in terms of multiplicities of discrete series representations of semisimple groups, where we assume the Riemannian metric to have a certain product form. The other is an index defined in terms of sections invariant under a group action. We obtain a relation with the analytic assembly map, 
 quantisation commutes with reduction results, and Atiyah--Hirzebruch type vanishing theorems. The arguments are based on an explicit decomposition of $\Spinc$-Dirac operators with respect to a global slice for the action.
\end{abstract}

\tableofcontents

\section{Introduction}

In part I of this series \cite{HS}, an equivariant index was defined for actions by possibly noncompact groups, with possibly noncompact orbit spaces. It was shown to apply to natural deformations of Dirac-type operators as in \cite{BravermanK, Braverman, HS2, HS3}, which have been used successfully in geometric quantisation \cite{HM2, HS, TZ}. 

In this paper, we consider such deformations of $\Spinc$-Dirac operators. 
For actions by semisimple Lie groups with discrete series, on manifolds with 
 Riemannian metrics of a certain product form, the equivariant index can be expressed in terms of multiplicities of discrete series representations in the kernel of such an operator. This motivates the definition of the \emph{discrete series index}.
We also show that, for these deformed $\Spinc$-Dirac operators, the equivariant index generalises the \emph{invariant index} studied in \cite{Braverman, HM1}. The latter is defined in terms of sections invariant under a group action.

The assumption on the Riemannian metric means direct geometric arguments can be used to obtain these relations with the equivariant index. (For the invariant index, this assumption is not necessary because the claim can always be reduced to metrics of that form.) In the cocompact case, the discrete series index is also directly related to the analytic assembly map
 \cite{BCH, Kasparov}. Furthermore, we obtain \emph{quantisation commutes with reduction} results for the discrete series index and the invariant index. 
The result for the invariant index sharpens an asymptotic result in \cite{HM2}. In the cocompact case, this result reduces to a $\Spinc$-version of Landsman's conjecture \cite{HL, Landsman}. Finally, Atiyah and Hirzebruch's vanishing result \cite{AH} on compact $\Spin$ manifolds generalises to the discrete series index and the invariant index in a way analogous to the $K$-theoretic result in \cite{HMAH}. (The result for the invariant index is actually a special case of the result in \cite{HMAH}.)

As said above, the results for the discrete series index hold for Riemannian metrics of a certain form. It is an interesting question to what extent they generalise to arbitrary (complete, invariant) metrics, as is the case for the invariant index.

\subsection*{Overview}

In Section \ref{sec prelim}, we recall the definitions of the equivariant index of \cite{HS} and the invariant index of \cite{Braverman, HM1}, introduce the discrete series index, and state the main results.
 In Section \ref{sec decomp Dirac} we give a decomposition of $\Spinc$-Dirac operators in terms of a global slice for the action. This leads to an induction result, 
 Proposition \ref{prop Dirac M N 1}. Then, in 
Section \ref{sec induction}, we include $L^2$-inner products to decompose the kernels of Dirac operators on the relevant spaces. This allows us to prove the main results in 
Subsection \ref{sec proofs}.

\subsection*{Acknowledgements}

The first author was supported by the European Union, through Marie Curie fellowship PIOF-GA-2011-299300.

\section{Preliminaries and results}\label{sec prelim}

Throughout this paper, we consider a complete Riemannian manifold $M$. We will identify $T^*M \cong TM$ via the Riemannian metric where convenient. Furthermore, $G$ will be a Lie group, with a maximal compact subgroup $K$. Unless stated otherwise, we assume that $G$ is unimodular, with a bi-invariant Haar measure $dg$, and that $G/K$ is even-dimensional. (Some of the constructions in this paper apply to more general groups, but we only apply them under these assumptions here.) 
We suppose $G$ acts properly and isometrically on $M$. Let $\cS = \cS^+ \oplus \cS^- \to M$ be a $\Z_2$-graded, $G$-equivariant Hermitian vector bundle.
In most of this paper, $\cS$ will be the spinor bundle of an equivariant $\Spinc$-structure.
For any (odd) operator $D$ on sections of $\cS$, we write $D^{\pm}$ for the restriction of $D$ to sections of  $\cS^{\pm}$.

The proofs of the results stated in this section are given in Subsection \ref{sec proofs}.

\subsection{Product metrics}

The results about the discrete series index in this paper hold for Riemannian metrics on $TM$ of a certain form. 
By Abels' theorem \cite{Abels}, there is a $K$-invariant submanifold $N \subset M$ such that the action map defines a $G$-equivariant diffeomorphism
\beq{eq Abels}
G\times_K N \cong M.
\eeq
Here the left hand side if the quotient of $G\times N$ by the action by $K$ given by $k\cdot (g, n) = (gk^{-1}, kn)$, for $k \in K$, $g \in G$ and $n \in N$.

Let $\kp \subset \kg$ be a $K$-invariant subspace such that $\kg = \kk \oplus \kp$. Then under \eqref{eq Abels}, we have
\beq{eq Abels TM}
TM = G\times_K (TN \oplus (N \times \kp)) \to M.
\eeq
\begin{definition}
A \emph{product metric} on $TM$ is a $G$-invariant Riemannian metric induced by a $K$-invariant Riemannian metric on $TN$ and a $K$-invariant inner product on $\kp$ via the isomorphism \eqref{eq Abels TM}.
\end{definition}

In the results about discrete series representations, the Riemannian metric on $TM$ will be assumed to be a product metric. This will be indicated in the relevant places.

\subsection{A realisation of discrete series representations} \label{sec AS Parth}

The explicit realisation of discrete series representations by Atiyah and Schmid \cite{AS} and Parthasarathy \cite{Parthasarathy} plays an important role in this paper. This realisation involves Dirac operators on $G/K$.

Let $G$ be any Lie group. Fix a $K$-invariant inner product on $\kp$.
 Let $\pi_{\kp}$ be the standard representation of $\Spin(\kp)$. Since $G/K$ is even-dimensional,  $\pi_{\kp}$ splits as $\pi_{\kp} = \pi_{\kp}^+ \oplus \pi_{\kp}^-$. 
 Let $D_{G/K}$ be the operator
\begin{equation} \label{eq def DGK}
D_{G/K} = \sum_{j = 1}^k X_j \otimes c(X_j)
\end{equation}
on $C^{\infty}(G) \otimes \pi_{\kp}$, where $\{X_1, \ldots, X_k\}$ is an orthonormal basis of $\kp$. Here $c\colon \kp\to \End(\pi_{\kp})$ is the Clifford action. 

Throughout this paper, we assume that the adjoint representation
 \[
 \Ad\colon K\to \SO(\kp)
 \]
 lifts to $\Spin(\kp)$, i.e.\ that $G/K$ is $G$-equivariantly $\Spin$. This is true for a double cover of $G$. Via this lift, we view $\pi_{\kp}$ as a representation of $K$. We will write $\pi_{\kp}$ for the formal difference
 \[
 \pi_{\kp} := \pi^+_{\kp} - \pi^{-}_{\kp} \quad \in R(K).
 \]
 Consider the diagonal representation of $K$ in $C^{\infty}(G) \otimes \pi_{\kp}$. The space \mbox{$\bigl( C^{\infty}(G) \otimes \pi_{\kp}\bigr)^K$} is the space of smooth sections of the spinor bundle $G\times_K \pi_{\kp} \to G/K$.

Now suppose $G$ is connected and semisimple with discrete series, i.e.\ $\rank(G) = \rank(K)$. Then by  Proposition 1.1 in \cite{Parthasarathy}, the restriction of $D_{G/K}$ to $\bigl( C^{\infty}(G) \otimes \pi_{\kp}\bigr)^K$ is the $\Spin$-Dirac operator on $G/K$.
Let $T<K$ be a maximal torus, with Lie algebra $\kt \subset \kk$. Let $\kt^*_+ \subset \kt^*$ be a choice of (closed) positive Weyl chamber. Let $R$ be the set of roots of $(\kk_{\C}, \kt_{\C})$, and let $R^+$ be the set of positive roots with respect to $\kt^*_+$. We denote half the sum of these positive roots by $\rho_K$.
Let $\Lambda_+ \subset i\kt^*$ be the set of dominant integral weights with respect to $\kt^*_+$. 
In the $\Spinc$-setting, it is natural to parametrise the irreducible representations of $K$ by their infinitesimal characters, rather than by their highest weights. For $\lambda \in \Lambda_+ + \rho_K$, let $\pi^K_{\lambda}$ be the irreducible representation of $K$ with infinitesimal character $\lambda$, i.e.\ with  highest weight $\lambda - \rho_K$. 

The discrete series of $G$ was realised in Theorem 9.3 in \cite{AS} and Theorem 1 in \cite{Parthasarathy}.
\begin{theorem}[Atiyah--Schmid, Parthasarathy] \label{thm AS Parth}
Let $\lambda \in \Lambda_+ + \rho_K$. One has
\[
\bigl( \pi^K_{\lambda}  \otimes \ker_{L^2} (D_{G/K}^-) \bigr)^K = 0.
\]
If $\lambda$ is singular, then also
\[
\bigl( \pi^K_{\lambda} \otimes \ker_{L^2} (D_{G/K}^+) \bigr)^K = 0.
\]
If $\lambda$ is regular, then 
\[
\bigl( \pi^K_{\lambda}  \otimes \ker_{L^2} (D_{G/K}^+) \bigr)^K = \pi_{\lambda}^{\ds},
\]
where $\pi_{\lambda}^{\ds}$ is
the discrete series representation of $G$ with Harish--Chandra parameter $\lambda$.
\end{theorem}

\subsection{$\Spinc$-Dirac operators and the equivariant index} \label{sec def quant}

We return to case where $G$ is any Lie group. Now suppose that $M$ is even-dimensional, and that it has a $G$-equivariant $\Spinc$-structure. The vector bundle $\cS$ is taken to be the spinor bundle associated to the $\Spinc$-structure. We denote the determinant line bundle of the $\Spinc$-structure by $L \to M$, and choose a $G$-invariant Hermitian connection $\nabla^L$ on $L$. Together with the Levi--Civita connection on $TM$, this induces a connection $\nabla^{\cS}$ on $\cS$ (see e.g.\ Proposition D.11 in \cite{LM}). This in turn defines a $\Spinc$-Dirac operator $D$ on $\cS$, by
\[
D\colon \Gamma^{\infty}(\cS) \xrightarrow{\nabla^{\cS}} \Omega^1(M; \cS) \xrightarrow{c} \Gamma^{\infty}(\cS).
\]
Here $c$ denotes the Clifford action by $TM \cong T^*M$ on $\cS$.



Let
\[
\mu\colon M \to \kg^*
\]
be the \emph{$\Spinc$-momentum map}, defined by
\begin{equation} \label{eq def mu}
2i  \mu_X = \cL^L_X - \nabla^L_{X^M} \quad \in \End(L) = C^{\infty}(M, \C).
\end{equation}
Here $\mu_X$ is the pairing of $\mu$ with an element $X \in \kg$, $\cL^L$ denotes the Lie derivative of sections of $L$, and $X^M$ is the vector field induced by $X$. By Lemma 5.3 in \cite{HM2}, the connection $\nabla^L$ can be chosen so that $\mu(N) \subset \kk^*$. (We identify $\kk^*$ with the annihilator of $\kp$ in $\kg^*$.) We assume $\nabla^L$ was chosen in this way.

In addition, fix a $K$-invariant inner product $(\relbar, \relbar)^{\kg}$ on $\kg$ extending the one on $\kp$, such that $\kk \perp \kp$.
Consider the metric $\{(\relbar, \relbar)_m\}_{m \in M}$ on the trivial vector bundle $M \times \kg \to M$
defined by
\[
\bigl(X,Y \bigr)_{gn} := (\Ad(g)^{-1}X, \Ad(g)^{-1}Y)^{\kg}
\]
for $X, Y \in \kg$, $g\in G$ and $n\in N$. Let $\mu^*\colon M \to \kg$ be the map defined by
\[
\langle \mu(m), X \rangle = (X, \mu^*(m))_m,
\]
for $X \in \kg$ and $m \in M$. Consider the $G$-invariant vector field $v$ on $M$ defined by
\begin{equation} \label{eq v mu}
v_m = 2\bigl(\mu^*(m)\bigr)^M_m,
\end{equation}
where $m \in M$, $\bigl(\mu^*(m)\bigr)^M$ is the vector field induced by $\mu^*(m) \in \kg$, and the factor 2 was included for consistency with \cite{HM1, HM2, TZ}. For a real-valued function $f\in C^{\infty}(M)^G$, the \emph{Dirac operator deformed by $fv$} is the operator
\begin{equation} \label{eq def Dv}
D_{fv} := D + ic(fv)
\end{equation}
on smooth sections of $\cS$.
 Already in the compact case, such a deformation was used by Tian and Zhang \cite{TZ} to prove Guillemin and Sternberg's quantisation commutes with reduction conjecture.
\begin{assumption} \label{ass taming}
We assume that the zeroes of $v$ form a \emph{cocompact} subset of $M$.
\end{assumption}



In \cite{HS2}, an equivariant index was defined for proper actions by possibly noncompact groups, with possibly noncompact orbit spaces. It was shown that this index applies to deformed Dirac operators (of a more general kind than the ones studied here). 
For any nonnegative function $\psi \in C^{\infty}(M)^G$, a nonnegative function $f \in C^{\infty}(M)^G$ is called \emph{$\psi$-admissible} if, outside a cocompact subset of $M$, we have
\[
\frac{f}{\|df\| + f + 1} \geq \psi.
\]
By Theorem 3.12 in \cite{HS2}, there is a nonnegative function $\psi \in C^{\infty}(M)^G$ such that for all $\psi$-admissible functions $f \in C^{\infty}(M)^G$, we have a well-defined equivariant index
\beq{eq def G index}
\ind_G(D_{fv}) := \Bigl[L^2(\cS), \frac{D_{fv}}{\sqrt{D_{fv}^2 + 1}}, \pi_{G, G/K} \Bigr] \in KK(C_0(G/K)\rtimes G, \C).
\eeq
Here $C_0(G/K)\rtimes G$ is a crossed-product $C^*$-algebra \cite{Williams}, and
the $*$-representation $\pi_{G, G/K}\colon C_0(G/K)\rtimes G \to \cB(L^2(\cS))$ is given by
\[
\bigl(\pi_{G, G/K} (\varphi)s\bigr)(gn) = \int_G \varphi(g', gK) g'\cdot (s(g'^{-1}gn))\, dg,
\]
for $\varphi \in C_c(G, C_0(G/K))$, $s \in L^2(\cS)$, $g \in G$ and $n\in N$. Via the Morita equivalence $C_0(G/K)\rtimes G \sim C^*K$, this index can be identified with an element of $KK(C^*K, \C) = \hat R(K)$. 

In the special case where $G=K$ is compact, the index \eqref{eq def G index} reduces to the index 
\beq{eq index K}
\ind_K(D_{fv})\in \hat R(K)
\eeq
studied by Braverman in \cite{BravermanK}.

\subsection{The invariant index}

In \cite{Braverman, HM1, HM2}, an index for proper actions is studied, defined with respect to sections invariant under the group action. 

A section of a vector bundle invariant under a proper action by a noncompact group cannot be square-integrable. For that reason, we use a  different Hilbert space of invariant sections.
Let $h\in C^{\infty}(M)$ be a \emph{cutoff function}, which means that it has compact support on $G$-orbits, and satisfies
\[
\int_G h(gm)^2 \, dg = 1
\]
for all $m\in M$. Then a section $s$ of $\cS$ is called \emph{transversally $L^2$} if $hs$ is $L^2$. This condition is independent of the cutoff function $h$ if $s$ is $G$-invariant. It was shown (for more general Dirac-type operators) in \cite{Braverman, HM1} that there is a nonnegative function $\psi \in C^{\infty}(M)^G$ such that for all $\psi$-admissible $f\in C^{\infty}(M)^G$, the spaces
\[
\ker_{L^2_T}(D_{fv}^{\pm})^G := \{s\in \Gamma^{\infty}(\cS^{\pm})^G;  \text{ $s$ transversally $L^2$ and $D_{fv} s = 0$}\}
\]
are finite-dimensional. For such $f$, the \emph{invariant index}
\beq{eq def invar index}
\ind_{L^2_T}^G (D_{fv}) := \dim \bigl( \ker_{L^2_T}(D_{fv}^{+})^G  \bigr) - \dim \bigl(\ker_{L^2_T}(D_{fv}^{-})^G\bigr)
\eeq
is independent of $f$. 
It is also independent of the Riemannian metric, as long as $M$ is complete. 
%

It was conjectured in Remark 4.4 in \cite{HS3} that, for more general Dirac-type operators, the invariant index can be recovered from the equivariant index of \eqref{eq def G index} as in the following result for $\Spinc$-Dirac operators.
\begin{proposition}\label{prop invar index}
If $f$ is $\psi$-admissible for $\psi$ as in the definitions of the indices \eqref{eq def G index} and \eqref{eq def invar index}, then for any $G$-invariant, complete Riemannian metric on $M$,
\[
\ind_{L^2_T}^G (D_{fv}) = \dim\bigl(\ind_{G} (D_{fv})^K \bigr),
\]
where on the right hand side, we view $\ind_{G} (D_{fv})$ as an element of $\hat R(K)$.
\end{proposition}

\subsection{The discrete series index} \label{sec ds index}

For now, let $\cS$ be any $\Z_2$-graded, Hermitian, $G$-equivariant vector bundle, and let $D$ be any odd, self-adjoint,  $G$-equivariant operator on $L^2(\cS)$.
We  assume that $G$ is connected and semisimple with discrete series. 
Let $\hat G_{\ds} \subset \hat G$ be the discrete part of the unitary dual of $G$. 
\begin{definition}
%
The \emph{discrete series representation group} of $G$ is the Abelian group
\[
R_{\ds}(G) := \Bigl\{ \bigoplus_{\pi \in \hat G_{\ds}} m_{\pi} \pi; m_{\pi} \in \Z, \text{nonzero for finitely many $\pi$}\Bigr\}.
\]
The \emph{completed discrete series representation group} of $G$ is the Abelian group
\[
\hat R_{\ds}(G) := \Bigl\{ \bigoplus_{\pi \in \hat G_{\ds}} m_{\pi} \pi; m_{\pi} \in \Z\Bigr\} \cong \Hom_{\Z}(R_{\ds}(G), \Z).
\]
\end{definition}
If $G=K$ is compact then we have $R_{\ds}(K) = R(K)$ and $\hat R_{\ds}(K) = \hat R(K)$, the usual representation ring and its completion. 
\begin{definition}
If the multiplicity \mbox{$[\ker_{L^2}(D):\pi]$} of any $\pi \in \hat G_{\ds}$ in the kernel of $D$ is finite, then
 $D$ is called \emph{$\ds$-Fredholm}, and its \emph{discrete series index} is
\[
\ind_{\ds}(D) := \bigoplus_{\pi \in \hat G_{\ds}} \bigl([\ker_{L^2}(D^+):\pi] - [\ker_{L^2}(D^-):\pi] \bigr) \pi \quad \in \hat R_{\ds}(G).
\]
\end{definition}

\begin{example}
If $M = G/H$, for a compact subgroup $H<G$, and $\cS$ is a vector bundle associated to a finite-dimensional representation of $H$, then Theorem 6.1 in \cite{CM} states that any elliptic pseudo-differential operator $D$ is $\ds$-Fredholm. In fact, one has
\[
\ker_{L^2}(D) \in R_{\ds}(G)
\]
for such operators. This was generalised to a larger class of groups in Theorem 6.2 in \cite{CM}. See  also Proposition 7.3.A.\ in the same paper, for Dirac operators.
\end{example}

From now on, $D$ will be a $\Spinc$-Dirac operator as in Subsection \ref{sec def quant}.
For a real-valued function $f\in C^{\infty}(M)^G$, let $D_{fv}$ be the deformed $\Spinc$-Dirac operator as in \eqref{eq def Dv}.
\begin{proposition} \label{prop def Dirac ds Fredholm}
Suppose the Riemannian metric on $TM$ is a product metric. Then
there is a nonnegative function $\psi \in C^{\infty}(M)^G$ such that for all $\psi$-admissible functions $f$, the operator $D_{fv}$ is $\ds$-Fredholm. Its $\ds$-index is independent of $f$ and $\nabla^L$. Furthermore, the $L^2$-kernel of $D_{fv}$ decomposes completely into discrete series representations.
\end{proposition}

\subsection{The discrete series index and other indices}

Let $G$ be connected and semisimple with discrete series.
Consider the \emph{Dirac induction} map
\beq{eq def Dirac ind}
\widehat{\DInd}_K^G\colon \hat R(K) \to \hat R_{\ds}(G)
\eeq
given by
\begin{equation} \label{eq def DInd}
\widehat{\DInd}_K^G(\pi^K_{\lambda}) = \bigl( \pi^K_{\lambda}  \otimes \ker_{L^2} (D_{G/K}^+) \bigr)^K - \bigl( \pi^K_{\lambda}  \otimes \ker_{L^2} (D_{G/K}^-) \bigr)^K.
\end{equation}
By Theorem \ref{thm AS Parth}, this map indeed takes values in $\hat R_{\ds}(G)$, it is surjective, and its restriction to the part of $\hat R(K)$ spanned by representations with regular infinitesimal characters is an isomorphism of Abelian groups. Also, the second term on the right hand side of \eqref{eq def DInd} is zero, but it was included for symmetry purposes.

In this subsection and the next, we suppose that the Riemannian metric on $TM$ is a product metric.
\begin{proposition} \label{prop G ds ind}
Suppose $f\in C^{\infty}(M)^G$ is $\psi$-admissible for $\psi$ both as in Proposition \ref{prop def Dirac ds Fredholm} and as in the definition of the index \eqref{eq def G index}. Then
\[
\ind_G(D_{fv}) = \pi_{\kp} \otimes (\widehat{\DInd}_K^G)^{-1} (\ind_{\ds} (D_{fv})) \quad \in \hat R(K).
\]
\end{proposition}
Since $G$ has discrete series representations, tensoring with $\pi_{\kp}$ is an invertible operation (see Lemma 4.7 in \cite{HS3}). So the equivariant index \eqref{eq def G index} determines the discrete series index of $D_{fv}$. 

Next, suppose that $M/G$ is compact. Then we have the \emph{analytic assembly map} \cite{Kasparov} from the Baum--Connes conjecture \cite{BCH}
\[
\mu_M^G\colon K_0^G(M) \to K_0(C^*_rG).
\]
Here $K_0^G(M)$ is the (even) $G$-equivariant \emph{$K$-homology} of $M$, and $K_0(C^*_rG)$ is the (even) \emph{$K$-theory} of the \emph{reduced goup $C^*$-algebra} of $G$.
Consider the inclusion map
\[
j\colon R_{\ds}(G) \hookrightarrow K_0(C^*_rG)
\]
given by $j(\pi) =  [d_{\pi}c_{\pi}]$, where $d_{\pi}$ is the formal degree of $\pi \in \hat G_{\ds}$, and $c_{\pi}$ is the matrix coefficient of any unit vector in the representation space of $\pi$ (see \cite{Lafforgue}). Let $p_{\ds}\colon K_0(C^*_rG) \to K_0(C^*_rG)$ be the projection onto the image of $j$.
\begin{proposition}\label{prop ass map}
If $M/G$ is compact, then the $\Spinc$-Dirac operator $D$ is $\ds$-Fredholm, its $\ds$-index lies in $R_{\ds}(G)$, and we have
\[
j\bigl(\ind_{\ds}(D) \bigr) = (-1)^{\dim(G/K)/2} p_{\ds}(\mu_M^G[D]).
\]
\end{proposition}

\subsection{$\Spinc$-quantisation commutes with reduction for the discrete series}\label{sec [Q,R]=0}

The \emph{quantisation commutes with reduction} principle of Guillemin and Sternberg \cite{GS} was extended from symplectic to $\Spinc$-manifolds by Paradan and Vergne \cite{PV1, PVI, PVII}. Their result for compact manifolds was generalised to noncompact ones by the authors of this paper \cite{HS}. The latter result generalises to discrete series representations.

 Let $\cF$ be the set of relative interiors of faces of the positive Weyl chamber $\kt^*_+$. 
For $\sigma \in \cF$, let $\kk_{\sigma}$ be the infinitesimal stabiliser of a point in $\sigma$. Let $R_{\sigma}$ be the set of roots of $\bigl( (\kk_{\sigma})_{\C}, \kt_{\C}\bigr)$, and let $R_{\sigma}^+ := R_{\sigma} \cap R^+$. Set
\[
\rho_{\sigma} := \frac{1}{2}\sum_{\alpha \in R^+_{\sigma}} \alpha.
\]
Note that if $\sigma$ is the interior of $\kt^*_+$, then $\rho_{\sigma} = 0$.

For any subalgebra $\kh \subset \kk$, let $(\kh)$ be its conjugacy class. Set
\[
\cH_{\kk} := \{ (\kk_{\xi}); \xi \in \kk\}.
\]
For $(\kh) \in \cH_{\kk}$, write
\[
\cF(\kh) := \{\sigma \in \cF; (\kk_{\sigma}) = (\kh)\}.
\]
Let $(\kk^M)$ be the conjugacy class (with respect to $K$) of the generic (i.e.\ minimal) infinitesimal stabiliser $\kk^M$ of the action by $K$ on $M$. 

For $i\xi \in i\kk^*$, consider the \emph{reduced space}
\[
 M_{i\xi} := \mu^{-1}(\xi)/G_{\xi}. 
\]
Here $G_{\xi}$ is the stabiliser of $\xi$ with respect to the coadjoint action. (Recall that we embed $\kk^*$ into $\kg^*$ as the annihilator of $\kp$.) By Propositions 3.13 and 3.14 in \cite{HM2}, we have $M_{\xi} = N_{\xi}$, including $\Spinc$-structures where relevant.
The $\Spinc$-quantisation $Q^{\Spinc}(M_{\xi}) = Q^{\Spinc}(N_{\xi}) $ of such a reduced space, for the values $\xi$ of $\mu$ we will need, is defined in Section 5.3 of \cite{PVI}.

Suppose the map 
 $\mu$ is \emph{$G$-proper}, in the sense that the inverse image of any cocompact set is cocompact.
 
\begin{theorem}[{$[Q^{\Spinc},R]=0$} for the discrete series] \label{thm [Q,R]=0}
In the setting of Proposition \ref{prop def Dirac ds Fredholm}, we have
\[
\ind_{\ds}(D_{fv}) = \bigoplus_{\lambda \in (\Lambda_+ + \rho_K)^{\reg}} m_{\lambda} \pi^{\ds}_{\lambda},
\]
where $(\Lambda_+ + \rho_K)^{\reg}\subset \Lambda_+ + \rho_K$ is the subset of regular elements, and
with $m_{\lambda} \in \Z$ given by
\begin{equation} \label{eq [Q,R]=0}
m_{\lambda} = \sum_{  \begin{array}{c} \vspace{-1.5mm} \scriptstyle{\sigma \in \cF(\kh) \text{ s.t.}}\\  \scriptstyle{\lambda - \rho_{\sigma} \in \sigma} \end{array}} Q^{\Spin^c}(M_{\lambda - \rho_{\sigma}}),
\end{equation}
where $(\kh) \in \cH_{\kk}$ is such that $([\kk^M, \kk^M]) = ([\kh, \kh])$. If no such $\kh$ exists, then $\ind_{\ds}(D_{fv}) = 0$, i.e.\ $m_{\lambda} = 0$ for all $\lambda$.
\end{theorem}

\begin{remark}
In \cite{HS}, where compact groups are considered,  it was not assumed that the set of  zeroes of $v$ is compact, only that $\mu$ is proper. The arguments in this paper actually show that Theorem \ref{thm [Q,R]=0} holds without Assumption \ref{ass taming}. We have not included this generalisation here, because the definition of the index is less straightforward in that case.
\end{remark}

\subsection{Invariant $\Spinc$-quantisation commutes with reduction}

Now let $G$ be any unimodular Lie group, such that $G/K$ is even-dimensional, and $G/K$ is equivariantly $\Spin$. In this subsection we suppose in addition that $G$ is reductive.
Consider any complete, $G$-invariant Riemannian metric on $TM$.
In Theorem 6.8 in \cite{HM2}, the invariant index of deformed $\Spinc$-Dirac operators was shown to satisfy an asymptotic version of the quantisation commutes with reduction principle. This result can be sharpened.
Consider the multiplicities $n_{\lambda} \in \Z$ in
\[
\pi_{\kp} = \sum_{\lambda \in \Lambda_+ + \rho_K} n_{\lambda} \pi^K_{\lambda}.
\]
\begin{theorem}[\QcwR\ for the trivial representation] \label{thm [Q,R0]=0}
If $f$ is $\psi$-admissible, then one has for any $G$-invariant, complete Riemannian metric on $M$,
\[
 \ind_{L^2_T}^G(D_{fv}) = \sum_{\lambda \in \Lambda_+ + \rho_K} n_{\lambda} m_{\lambda},
\]
with $m_{\lambda}$ as in \eqref{eq [Q,R]=0}.
\end{theorem}

In the cocompact case, every smooth section is transversally $L^2$. Hence
\beq{eq invar index cocpt}
\ind_{L^2_T}^G(D) = \dim\bigl(\ker(D^+)^G\bigr) - \dim\bigl( \ker (D^-)^G \bigr).
\eeq
(See Theorem 2.7 in \cite{MZ}.) By Lemma D.2 and Proposition D.3 in Bunke's appendix to \cite{MZ}, the above integer equals
\beq{eq Bunke}
I^G_*\bigl(\mu_M^G[D]\bigr),
\eeq
where $I^G\colon C^*G \to \C$ is defined by integrating functions over $G$, on the dense subalgebra $L^1(G)\subset C^*G$. Now we use the maximal group $C^*$-algebra rather than the reduced one. Furthermore, Assumption \ref{ass taming} holds automatically now. We do not need to assume that the Riemannian metric is a product metric, because the $K$-homology class
 of $D$ does not depend on the Riemannian metric. Therefore, we obtain the following $\Spinc$-version of Landsman's quantisation commutes with reduction conjecture \cite{HL, Landsman}. Compared to the main result in \cite{MZ}, this result applies in the more general $\Spinc$-setting, and also holds exactly, rather than asymptotically. (The assumptions that $G$ is reductive and unimodular, and $G/K$ is equivariantly $\Spin$ are not made in \cite{MZ}, however.)
 \begin{corollary}[$\Spinc$-Landsman conjecture]
 If $M/G$ is compact, then
for any $G$-invariant Riemannian metric on $M$, we have 
\[
I^G_*\bigl(\mu_M^G[D]\bigr) = \sum_{\lambda \in \Lambda_+ + \rho_K} n_{\lambda} m_{\lambda},
\]
with $m_{\lambda}$ as in \eqref{eq [Q,R]=0}.
 \end{corollary}
 
\subsection{Vanishing results on $\Spin$-manifolds} \label{sec AH}

In \cite{AH}, Atiyah and Hirzebruch showed that for compact, connected Lie groups (or equivalently, for circles) the equivariant index of a $\Spin$-Dirac operator on a compact $\Spin$ manifold is zero for nontrivial actions. This was generalised to cocompact actions in a $K$-theoretical setting in \cite{HMAH}.  There are also versions for the discrete series index and the invariant index.

 The action by $G$ on $M$ is called \emph{properly trivial} if every stabiliser group is maximal compact (i.e., is as large as it can be). Otherwise it is called properly nontrivial. 
 Let $G$ be as in the previous subsection, but without assuming it to be reductive.
\begin{theorem} \label{thm AH}
Suppose $M$ is $G$-equivariantly $\Spin$, and that the action is cocompact and properly nontrivial. If $D$ is the $\Spin$-Dirac operator, then
\begin{itemize}
\item For any $G$-invariant Riemannian metric on $M$,
$\ind_{L^2_T}^G(D)=0$.
\item If $G$ is connected and semisimple  with discrete series, and the Riemannian metric on $TM$ is a product metric, then
$
\ind_{\ds}(D) =0$.
\end{itemize}
\end{theorem}
In the cocompact case, we saw that $\ind_{L^2_T}^G(D)$ equals \eqref{eq invar index cocpt} and \eqref{eq Bunke}. Because of the latter equality, the first part of Theorem \ref{thm AH} also follows from the result in \cite{HMAH}.


\section{Decomposing the Dirac operator} \label{sec decomp Dirac}

The proofs of the results in Section \ref{sec prelim} are based on two induction results: Propositions \ref{prop ind ds} and \ref{prop [Q,Ind]=0 non-cocpt}. We prove these by decomposing the $\Spinc$ Dirac operator $D$ in an explicit way, as discussed in this section. 

In this section, unless stated otherwise, $G$ is any Lie group for which the adjoint action by $K$ on $\kp$ lifts to $\Spin(\kp)$. As before, suppose that $K<G$ is a maximal compact subgroup, and that $M$ and $G/K$ are even-dimensional. (Unimodularity of $G$ is not used in this section.) We assume that the Riemannian metric on $M$ is a product metric.

\subsection{Dirac operators on $N$ and $G/K$}

Let $P \to M$ be the $G$-equivariant $\Spinc$-structure used before. 
In Section 3.2 of \cite{HochsDS} and Section 3.2 of \cite{HM2}, an induction procedure of equivariant $\Spinc$-structures from $N$ to $M$ is described. Proposition 3.10 of \cite{HM2} is a $\Spinc$-slice theorem, which states that there is a $K$-equivariant $\Spinc$-structure $P_N$ on $N$, such that the induced $\Spinc$-structure on $M$ equals the $\Spinc$-structure originally given. The connection $\nabla^L$ on $L \to M$ restricts to a connection on the determinant line bundle $L_N = L|_N$ of this $\Spinc$-structure on $N$. This defines a $\Spinc$-momentum map $\mu_N\colon N \to \kk^*$, analogously to \eqref{eq def mu}. In Lemma 5.3 of \cite{HM2}, it was shown that the connection $\nabla^L$ can be chosen such that $\mu(N) \subset \kk^*$, and
\begin{equation} \label{eq mu mN}
\mu_N = \mu|_N.
\end{equation}


Since $M$ and $G/K$ are even-dimensional,  so is $N$. 
Let $\cS_N \to N$ be the spinor bundle associated to $P_N$. Let $\nabla^{\cS_N}$ be the spinor connection on $\cS_N$ defined by the Levi--Civita connection on $TN$ and the connection $\nabla^L|_N$ on $L_N = L|_N$.
 Let 
 $D_N$ be the associated $\Spinc$-Dirac operator on $\cS_N$. 

By Lemma 6.2 in \cite{HochsDS}, one has an equivariant vector bundle isomorphism
\begin{equation} \label{eq decomp S}
\cSM \cong G\times_K(\cS_N \otimes \pi_{\kp}).
\end{equation}
(Here we use graded tensor products.) At the level of smooth sections, we get
\begin{equation} \label{eq decomp Gamma SM}
\Gamma^{\infty}(\cSM) \cong \bigl(   \Gamma^{\infty}(G\times N, p_N^*\cS_N) \otimes \pi_{\kp} \bigr)^K,
\end{equation}
where $p_N\colon G \times N \to N$ is the natural projection map.

For $s \in \Gamma^{\infty}(\cS_N)$ and $\varphi \in C^{\infty}(G)\otimes \pi_{\kp}$, define $\sigma(s\otimes \varphi) \in \Gamma^{\infty}(p_N^*\cS)\otimes\pi_{\kp}$ by
\[
\bigl(\sigma(s\otimes \varphi)\bigr)(g, n) = s(n) \otimes \varphi(g),
\]
for $n \in N$ and $g \in G$.  Let $\varepsilon$ be the grading operator on $\cS_N$, equal to $\pm 1$ on $\cS_N^{\pm}$. 

\begin{proposition} \label{prop Dirac M N 1}
The map $\sigma$, together with \eqref{eq decomp Gamma SM}, defines a $G$-equivariant linear isomorphism
\[
 \bigl(\Gamma^{\infty}(\cS_N) \hat \otimes C^{\infty}(G)\otimes \pi_{\kp}\bigr)^K \cong \Gamma^{\infty}(\cSM),
\]
where $\hat \otimes$ denotes the tensor product completed in the Fr\'echet topology on $\Gamma^{\infty}(\cSM)$. Under this isomorphism, the Dirac operator $\DM$ corresponds to 
\begin{equation} \label{eq DM DN DGK}
D_N \otimes 1 + \varepsilon \otimes D_{G/K},
\end{equation}
where $D_{G/K}$ was defined in \eqref{eq def DGK}.
\end{proposition}

\begin{remark}
If $N$ is a point, then Proposition \ref{prop Dirac M N 1} reduces to Proposition 1.1 in \cite{Parthasarathy} (where one takes $V$ to be the trivial representation). If $G=K$ is compact, then one gets the trivial identity $D_N = D_N$. In Proposition 6.7 of  \cite{HochsDS}, it was shown that Proposition \ref{prop Dirac M N} holds at the level of principal symbols.
\end{remark}


\subsection{A reformulation}

We will in fact first prove a reformulation of Proposition \ref{prop Dirac M N 1}, and then deduce this proposition.

With respect to the decompositions \eqref{eq Abels TM} and \eqref{eq decomp S},
the Clifford action $c$ by $TM$ on $\cSM$ is given by
\begin{equation} \label{eq decomp c}
c[g, v, X] [g, s_N, y] = [g, c_N(v)s_N, y] + \ [g, \varepsilon s_N,  c_{\kp}(X) y].
\end{equation}
Here $g \in G$, $n \in N$, $v \in T_nN$, $X \in \kp$, $s_N \in (\cS_N)_n$, $y \in \pi_{\kp}$, and we used the Clifford actions $c_N\colon TN \to \End(\cS_N)$ and $c_{\kp}\colon \kp \to \End(\pi_{\kp})$. 
 Let $p_N^*D_N$ be the operator on $\Gamma^{\infty}(p_N^*\cS_N)$ given by
\[
(p_N^*D_N s)(g, n) = D_N \bigl(s(g, \relbar)\bigr)(n),
\]
for $s \in \Gamma^{\infty}(p_N^*\cS_N)$, $g \in G$ and $n \in N$. 

Fix an orthonormal basis $\{X_1, \ldots, X_{k}\}$ of $\kp$, and consider the operator
\begin{equation} \label{eq Dp}
D_{\kp} := \sum_{j=1}^{k} \cL_{X_j} \otimes c_{\kp}(X_j)
\end{equation}
on $\Gamma^{\infty}(\cSM)$, via \eqref{eq decomp Gamma SM}, where $\cL_{X_j}$ is the Lie derivative of sections of 
$p_N^*\cS_N$ with respect to $X_j$. 
Then we have the following decomposition of $\DM$.
\begin{proposition} \label{prop Dirac M N}
Under the identification \eqref{eq decomp Gamma SM}, one has
\[
\DM = p_N^*D_N + \varepsilon D_{\kp}, 
\]
restricted to $K$-invariant sections. 
\end{proposition}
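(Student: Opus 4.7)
The proof is a pointwise computation, exploiting that $D^M$ is a first-order local operator and that both sides of the claimed identity are $G$-equivariant. It therefore suffices to verify the formula at a point $n \in N \subset M$, acting on a section represented via \eqref{eq decomp Gamma SM} by a $K$-invariant element $\tilde s \in \Gamma^\infty(G\times N, p_N^*\cS_N) \otimes \Delta_{\kp}$. I would choose an orthonormal frame for $T_nM$ adapted to the decomposition \eqref{eq decomp TM}: pick an orthonormal frame $\{e_1, \ldots, e_{\dim N}\}$ of $T_nN$, extend by the vectors $X_1^M|_n, \ldots, X_k^M|_n$ induced by the orthonormal basis $\{X_j\}$ of $\kp$. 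The regularity assumption on the Riemannian metric from Subsection \ref{sec slices} guarantees that this is an orthonormal frame at $n$.

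Writing $D^M|_n = \sum_i c(e_i)\nabla^{\cS^M}_{e_i} + \sum_j c(X_j^M) \nabla^{\cS^M}_{X_j^M}$ and applying \eqref{eq decomp c}, the Clifford action decomposes so that $c(e_i)$ acts as $c_N(e_i)\otimes 1$ and $c(X_j^M)$ acts as $\varepsilon\otimes c_{\kp}(X_j)$ on $\cS_N\otimes \Delta_{\kp}$. It then remains to identify the two kinds of covariant derivatives on $K$-invariant sections. For $e_i \in T_nN$: by the $\Spinc$-slice construction in Subsection \ref{sec slices}, and in particular the restriction property \eqref{eq mu mN} of $\nabla^L$, the Levi-Civita connection on $TM$ restricted to $TN$-directions agrees at $n$ with the Levi-Civita connection of $TN$, and $\nabla^L|_N = \nabla^{L_N}$. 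Using that $\Delta_{\kp}$ corresponds to the trivial $\kp$-bundle (with the fiberwise $K$-action), one concludes $\nabla^{\cS^M}_{e_i}$ restricts to $\nabla^{\cS_N}_{e_i}\otimes 1$ on $\cS_N \otimes \Delta_{\kp}$ at $n$. Summing over $i$ produces $p_N^*D_N$ acting on the $\cS_N$-factor. For $X_j^M$: for a $K$-invariant section represented by $\tilde s$, one has $\nabla^{\cS^M}_{X_j^M}\tilde s|_n = (X_j \tilde s)(e, n)$ up to Levi-Civita connection coefficients in the $\kp$-direction; summed against $\varepsilon\otimes c_{\kp}(X_j)$, these give precisely $\varepsilon D_{\kp}$.

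The main obstacle will be controlling the Levi-Civita Christoffel coefficients at $n$ that mix $T_nN$ and $\kp$ directions (the second fundamental form of $N$ in $M$ together with the O'Neill-type tensor of the fiber bundle $G\times_K N \to G/K$ locally). I expect these contributions to cancel after contraction with the Clifford action and summation over the orthonormal frame, using skew-symmetry in the Clifford algebra against the symmetry of the mixed Christoffels, together with the fact that the metric was chosen so that the inner product on $\kp$ is parallel along $N$. Once this cancellation is established, collecting the two pieces gives $D^M = p_N^*D_N + \varepsilon D_{\kp}$ on $K$-invariant sections, as claimed.
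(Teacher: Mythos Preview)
Your overall architecture is right, and it matches the paper's: write $D^M$ in an adapted frame, use \eqref{eq decomp c} to split the Clifford action, and identify the two sums with $p_N^*D_N$ and $\varepsilon D_{\kp}$. The difference lies in how you handle the spinor connection, and this is where your proposal has a genuine gap.

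You flag as the ``main obstacle'' the mixed Christoffel terms coupling $TN$ and $\kp$, and you \emph{expect} them to cancel after Clifford contraction, invoking ``skew-symmetry in the Clifford algebra against the symmetry of the mixed Christoffels.'' This heuristic is not a proof, and in fact the mechanism you describe is not what happens. There is no cancellation after contraction: the mixed terms are \emph{identically zero} before any Clifford action is applied. The paper establishes this by a global argument (Lemma~\ref{lem LC decomp}): the direct-sum connection $p_N^*\nabla^N \oplus p_{G/K}^*\nabla^{G/K}$ on $p_M^*TM = p_N^*TN \oplus p_{G/K}^*T(G/K)$ is shown to be metric and torsion-free, hence it \emph{is} the Levi--Civita connection of $M$. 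In particular $\nabla^M_{p_N^*v}(p_{G/K}^*w) = 0$ and $\nabla^M_{p_{G/K}^*w}(p_N^*v) = 0$ for pulled-back vector fields, so $N$ is totally geodesic and the $\kp$-directions are parallel along $N$. This is the key lemma your proposal is missing; once you have it, Lemma~\ref{lem decomp nabla SM} gives the clean decomposition $\nabla^{\cS^M}_{p_N^*v + X} = (p_N^*\nabla^{\cS_N})_{p_N^*v} + X$ of the spinor connection, with no residual curvature or second-fundamental-form terms to track.

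Your claim that ``the Levi--Civita connection on $TM$ restricted to $TN$-directions agrees at $n$ with the Levi--Civita connection of $TN$'' is precisely the statement that $N$ is totally geodesic, and it needs justification---appealing to the restriction property of $\nabla^L$ does not give it, since that concerns the determinant line bundle, not the tangential connection. Likewise, the assertion that $\nabla^{\cS^M}_{X_j^M}$ reduces to bare differentiation $X_j$ requires knowing that the Levi--Civita connection of $G/K$ is the canonical one on $G\to G/K$ \emph{and} that there are no cross terms from the $TN$-summand; both follow from Lemma~\ref{lem LC decomp}. I would recommend proving that lemma first (the torsion-free check on mixed pulled-back vector fields is the only nontrivial point, and it is short), after which the rest of your outline goes through without any delicate cancellation argument.
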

This result implies Proposition \ref{prop Dirac M N 1}.

\medskip\noindent \emph{Proof of Proposition \ref{prop Dirac M N 1}.}
The map  $\sigma$ maps $K$-invariant sections to $K$-invariant sections, and its image is dense in $\bigl(\Gamma^{\infty}(p_N^*\cS)\otimes\pi_{\kp}\bigr)^K$. Furthermore, with notation as above,
\[
\begin{split}
\bigl( \sigma\bigl( D_Ns \otimes \varphi + \varepsilon  s\otimes D_{G/K}\varphi\bigr) \bigr)(g, n) &= (D_N s)(n) \otimes \varphi(g) + \varepsilon  s(n) \otimes (D_{G/K}\varphi)(g) \\
	&=\bigl( \bigl(p_N^*D_N + \varepsilon D_{\kp} \bigr)\sigma(s\otimes \varphi) \bigr)(g, n). 
\end{split}
\]
Proposition \ref{prop Dirac M N} states that $p_N^*D_N + \varepsilon D_{\kp}$, restricted to $K$-invariant sections, is the Dirac operator $\DM$.
\hfill $\square$

\medskip
It remains to prove Proposition \ref{prop Dirac M N}.

\subsection{The Levi--Civita connection}

To prove Proposition \ref{prop Dirac M N}, we start by decomposing the Levi--Civita connection on $TM$. Let $\nabla^N$ be the Levi--Civita connection on $TN$, and let $\nabla^{G/K}$ be the Levi--Civita connection on $T(G/K)$, for the Riemannian metric defined by the given inner product on $\kp$. 
Consider the projection map $p_{G/K}\colon G \times N \to G/K$.
Using
\[
p_{G/K}^* T(G/K) = p_{G/K}^* (G\times_K \kp) = G\times N \times \kp \to G\times N,
\]
we rewrite \eqref{eq Abels TM} as
\[
TM = \bigl(p_N^* TN \oplus  p_{G/K}^* T(G/K)\bigr)/K.
\]
In terms of the action map $p_M\colon G \times N \to M$,  this can be rephrased as
\[
p_M^*TM = p_N^* TN \oplus  p_{G/K}^* T(G/K).
\]
We find that the space $\XX(M)$ of vector fields on $M$ decomposes as 
\[
\XX(M) = \Gamma^{\infty}\bigl(G\times N, p_N^* TN \oplus  p_{G/K}^* T(G/K) \bigr)^K.
\]

Consider the connection
\[
\widetilde{\nabla}^M := p_N^*\nabla^N \oplus p_{G/K}^* \nabla^{G/K}
\]
on $p_N^* TN \oplus  p_{G/K}^* T(G/K)$.  Let $\nabla^M$ be the connection on $TM$ equal to the restriction of $\widetilde{\nabla}^M$ to $K$-invariant sections. In other words, $p_M^*\nabla^M = \widetilde{\nabla}^M$.
\begin{lemma} \label{lem LC decomp}
The connection $\nabla^M$ is the Levi--Civita connection on $TM$.
\end{lemma}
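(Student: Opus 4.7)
The plan is to invoke the uniqueness of the Levi--Civita connection by checking that $\nabla^M$ is metric-compatible with the Riemannian metric $g^M$ on $TM$ (the metric induced by the $\Spinc$-structure) and that $\nabla^M$ is torsion-free.

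Metric compatibility is the easier half. By the assumption recorded just after \eqref{eq decomp TM}, $g^M$ is, via the isomorphism \eqref{eq decomp TM}, the orthogonal direct sum of the Riemannian metric on $TN$ and the metric on $\kp_N$ coming from the chosen inner product on $\kp$; under $p_M$ this becomes the orthogonal direct sum of the pullback metrics on $p_N^*TN$ and $p_{G/K}^*T(G/K)$. Pullback of a metric connection is metric for the pullback metric, so $p_N^*\nabla^N$ and $p_{G/K}^*\nabla^{G/K}$ are individually metric, and their direct sum $\widetilde\nabla^M$ is metric for the orthogonal direct sum. Restricting to $K$-invariant sections gives metric compatibility of $\nabla^M$.

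For torsion-freeness, since torsion is tensorial I would verify $T^{\nabla^M}(X,Y)=0$ pointwise, checking on $K$-invariant vector fields on $G\times N$ whose values at a chosen $(g_0,n_0)\in G\times N$ span a complement to the $K$-orbit direction in $T_{(g_0,n_0)}(G\times N)$. Two types suffice: (a) pullbacks $p_N^*Y$ for $Y\in \XX(N)$, trivially extended in the $G$-direction; and (b) vector fields purely in the $G$-direction, projectable under $p_{G/K}$ to a local frame of $T(G/K)$ near $g_0K$ (built, say, from a basis of $\kp$ by left translation and $K$-averaging). The three cases then decouple: on (a,a)-pairs the connection $\widetilde\nabla^M$ reduces to $p_N^*\nabla^N$ and the Lie bracket stays in the $N$-factor, so the torsion pulls back from the (vanishing) torsion of $\nabla^N$; on (b,b)-pairs it reduces to $p_{G/K}^*\nabla^{G/K}$ and descends to the (vanishing) torsion of $\nabla^{G/K}$; and on mixed (a,b)-pairs the Lie bracket $[(0,Y),(X_G,0)]$ vanishes because the vector fields act on disjoint factors of $G\times N$, while both summands of $\widetilde\nabla^M$ also vanish because differentiating an $N$-direction section along a $G$-direction vector (or vice versa) kills one of the differentials $dp_N$ or $dp_{G/K}$.

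The main obstacle I anticipate is the construction in (b) of $K$-invariant vector fields on $G\times N$ that simultaneously take the prescribed values at $(g_0,n_0)$ and project to a well-defined frame on $G/K$; left-invariant vector fields on $G$ are not $K$-invariant under the action $k\cdot(g,n)=(gk^{-1},kn)$, so a mild modification (for example an $\Ad(K)$-averaged lift, or a frame built only on a slice transverse to the $K$-orbit through $(g_0,n_0)$) is required. Since torsion is a pointwise tensor depending only on germs, this can be done locally near $(g_0,n_0)$ without affecting the computation.
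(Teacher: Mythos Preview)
Your proposal is correct and follows essentially the same route as the paper: verify metric compatibility from the direct-sum structure, then check torsion on the two types of pulled-back vector fields and on the mixed pair, using that torsion is tensorial.

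The only point worth flagging is that the ``main obstacle'' you anticipate is a non-issue. Rather than trying to build $K$-invariant vector fields in the $G$-direction by averaging left-invariant fields on $G$, the paper simply takes $p_{G/K}^*v_{G/K}$ for $v_{G/K}\in\XX(G/K)$. Since $p_{G/K}:G\times N\to G/K$, $(g,n)\mapsto gK$, is invariant under $k\cdot(g,n)=(gk^{-1},kn)$, any section pulled back along it is automatically $K$-invariant. These pullbacks already span the summand $p_{G/K}^*T(G/K)$ pointwise, so no local slice or averaging construction is needed.
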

\begin{proof}
The fact that $\nabla^N$ and $\nabla^{G/K}$ preserve the Riemannian metrics on $N$ and $G/K$, respectively, implies that $\nabla^M$ preserves the Riemannian metric on $M$. Here we use the fact that the Riemannian metric on $TM$ is a product metric.

To show that $\nabla^M$ is torsion-free, we note that the torsion $\Tor^{\nabla^M}$ of $\nabla^M$ is a tensor, so it is enough to show it vanishes on a set of vector fields spanning $TM$. Therefore, we only need to show it vanishes on ($K$-invariant) vector fields of the forms $p_N^*v_N$ and $p_{G/K}^*v_{G/K}$, for $v_N \in \XX(N)$ and $v_{G/K} \in \XX(G/K) \cong ((C^{\infty}(G)\otimes  \kp))^K$. 

Now for  $v_N, w_N \in \XX(N)$, we have
\[
\nabla^M_{p_N^*v_N}(p_N^*w_N) = (p_N^*\nabla^N)_{p_N^*v_N}(p_N^*w_N) = p_N^*\bigl(  \nabla^N_{v_N} w_N\bigr).
\]
Hence, because $\nabla^N$ is torsion-free,
\[
\begin{split}
\nabla^M_{p_N^*v_N}(p_N^*w_N)  - \nabla^M_{p_N^*w_N} (p_N^*v_N) &= p_N^* \bigl(  \nabla^N_{v_N} w_N - \nabla^N_{w_N} v_N \bigr) \\
&= p_N^*[v_N, w_N] \\
&= [p_N^*v_N, p_N^*w_N].
\end{split}
\]
So
\[
\Tor^{\nabla^M}(p_N^*v_N, p_N^*w_N) = 0.
\]
One similarly shows that for all $v_{G/K}, w_{G/K} \in \XX(G/K)$,
\[
\Tor^{\nabla^M}(p_{G/K}^*v_{G/K}, p_{G/K}^*w_{G/K}) = 0.
\]
It therefore remains to show that
\begin{equation} \label{eq tor N G K}
\Tor^{\nabla^M}(p_N^*v_N, p_{G/K}^*v_{G/K}) = 0,
\end{equation}
with $v_N$ and $v_{G/K}$ as above.

Since each of the vector fields $p_N^*v_N$ and $p_{G/K}^*v_{G/K}$ is tangent to the directions the other vector field is constant in, their Lie bracket vanishes. Also, 
\[
\nabla^M_{p_N^*v_N} (p_{G/K}^*v_{G/K}) = (p_{G/K}^*\nabla^{G/K})_{p_N^*v_N} (p_{G/K}^*v_{G/K}) = 0,
\]
since the tangent map of $p_{G/K}$ is zero on the image of $p_N^*v_N$. Similarly, one has $\nabla^M_{p_{G/K}^*v_{G/K}} (p_{N}^*v_{N}) = 0$. So in particular,\footnote{Note that the Lie bracket of sections of $p_N^*TN$ and $p_{G/K}^*T(G/K)$, and the analogous expression to \eqref{eq tor nabla N G K}, do \emph{not} vanish in general. This is only the case for the pulled-back sections considered here, which is enough.}
\begin{equation} \label{eq tor nabla N G K}
\nabla^M_{p_N^*v_N} (p_{G/K}^*v_{G/K}) - \nabla^M_{p_{G/K}^*v_{G/K}} (p_{N}^*v_{N}) = 0.
\end{equation}
We conclude that \eqref{eq tor N G K} holds. So $\nabla^M$ is torsion-free, and hence indeed the Levi--Civita connection on $TM$.
\end{proof}

\subsection{Spinor connections}

The decomposition of the Levi--Civita connection in Lemma \ref{lem LC decomp} implies an analogous decomposition of the spinor connection $\nabla^{\cSM}$ on $\cSM$, associated to the connection $\nabla^{\LM}$ on $\LM \to M$. 
\begin{lemma} \label{lem decomp nabla SM}
In terms of the decomposition \eqref{eq decomp Gamma SM}, one has for all $X \in \kp$ and $v \in \XX(N)$,
\begin{equation} \label{eq decomp nabla SM}
\nabla^{\cSM}_{p_N^*v + {X^M}} = (p_N^*\nabla^{\cS_N})_{p_N^*v}  +  \cL_X, 
\end{equation}
where $\cL_X$ is the Lie derivative of sections of $\cS$ with respect to $X$.
\end{lemma}
\begin{proof}
Let $U \subset N$ be a $K$-invariant open subset such that
\[
\cS_N|_U = \cS_0^U \otimes (L_N|_U)^{1/2},
\]
where $\cS_0^U \to U$ is the spinor bundle for a local $\Spin$-structure on $U$. Then 
\[
\cSM|_{G\times_K U} = \bigl(G\times_K (\cS_0^U \otimes \pi_{\kp})\bigr) \otimes  \bigl( G\times_K (L_N|_U)\bigr)^{1/2}.
\]
We have
\[
\nabla^{\cSM}|_{G\times_K U} = \nabla^{\cS_0^{G\times_K U}} \otimes 1 + 1\otimes \nabla^{(\LM|_{G\times_K U})^{1/2}},
\]
where $\nabla^{\cS_0^{G\times_K U}}$ is the connection on the spinor bundle $\cS_0^{G\times_K U}\to G\times_K U$ induced by the Levi--Civita connection on $G\times_K U \hookrightarrow M$. 

First note that for all $s_{L_N} \in \Gamma^{\infty}(L_N)^K$, we have $p_N^*s_{L_N} \in \Gamma^{\infty}(p_N^*L_N)^K = \Gamma^{\infty}(\LM)$, and for all $X \in \kp$ and $v \in \XX(N)$,
\[
\nabla^{\LM}_{p_N^*v + {X^M}} (p_N^*s_{L_N}) = p_N^*\bigl( \nabla^{L_N}_{v} s_{L_N} \bigr) = (p_N^* \nabla^{L_N})_{p_N^*v} p_N^*s_{L_N}. 
\]
This follows from the definition of $\nabla^L$ in (22) in \cite{HochsDS}. 
Furthermore, let $\nabla^{\cS_0^U}$ be the connection on $\cS_0^U$ induced by $\nabla^N$, and let $\nabla^{\cS_0^{G/K}}$ be the connection on the spinor bundle $\cS_0^{G/K} = G\times_K\pi_{\kp} \to G/K$ induced by $\nabla^{G/K}$. Then Lemma \ref{lem LC decomp} implies that 
one has
\[
\nabla^{\cS_0^{G\times_K U}} = p_N^*\nabla^{\cS_0^U}  + p_{G/K}^*\nabla^{\cS_0^{G/K}},
\]
restricted to $K$-invariant sections.
The connection $\nabla^{\cS_0^{G/K}}$ on $\Gamma^{\infty}(\cS_0^{G/K}) = \bigl(C^{\infty}(G) \otimes \pi_{\kp} \bigr)^K$ is simply given by
\[
\nabla^{\cS_0^{G/K}}_{X^{G/K}} s_{G/K} = \cL_X(s_{G/K}),
\]
for $X \in \kp$ and $s_{G/K} \in \bigl(C^{\infty}(G) \otimes \pi_{\kp} \bigr)^K$. (As noted on page 7 of \cite{Parthasarathy}, the connection $\nabla^{G/K}$ is induced by the canonical connection on the principal fibre bundle $G \to G/K$.)

Since both sides of \eqref{eq decomp nabla SM} satisfy the Leibniz rule, it is enough to check this equality on a set of sections spanning $\cSM|_U$. Hence it is enough to consider a section
\[
s:= p_N^*(s_N \otimes s_{L_N}) \otimes p_{G/K}^*s_{G/K} \in \Gamma^{\infty}(\cSM|_{G\times_K U}),
\]
for
\[
\begin{split}
s_N &\in \Gamma^{\infty}(\cS_0^U)^K; \\
s_{G/K}&\in  \bigl(C^{\infty}(G) \otimes \pi_{\kp} \bigr)^K;\\
s_{L_N} &\in\Gamma^{\infty} \bigl(L_N|_U^{1/2}\bigr)^K.
\end{split}
\]
For such a section, and for all $X \in \kp$ and $v \in TU$, the preceding arguments allow us to compute
\[
\begin{split}
\nabla^{\cSM}_{p_N^*v + {X^M}}s &= \nabla^{\cS_0^{G\times_K U}}_{p_N^*v + {X^M}} \bigl(  p_N^*s_N \otimes p_{G/K}^*s_{G/K}\bigr)\otimes p_N^*s_{L_N} \\
		& \qquad + 
		p_N^*s_N \otimes p_{G/K}^*s_{G/K} \otimes \nabla^{\LM|_{G\times_K U}^{1/2}}_{p_N^*v + {X^M}} p_N^*s_{L_N} \\
		&= 
		(p_N^*\nabla^{\cS_0^U})_{p_N^*v}(p_N^*s_N) \otimes p_{G/K}^*s_{G/K}  \otimes p_N^*s_{L_N}   \\
		& \qquad + 
		p_N^*s_N \otimes \cL_X(p_{G/K}^*s_{G/K})  \otimes p_N^*s_{L_N} \\
		& \qquad + p_N^*s_N \otimes p_{G/K}^*s_{G/K} \otimes p_N^*\bigl(\nabla^{L_N|_{U}^{1/2}}_v s_{L_N}\bigr) \\
		&= 
		(p_N^*\nabla^{\cS_N})_{p_N^*v}\bigl(p_N^*(s_N \otimes s_{L_N}) \bigr) \otimes p_{G/K}^*s_{G/K}  \\
		& \qquad + 
		p_N^*(s_N \otimes s_{L_N}) \otimes \cL_X(p_{G/K}^*s_{G/K}) \\
			&= 
		\bigl( (p_N^*\nabla^{\cS_N})_{p_N^*v} + \cL_X\bigr)s,
\end{split}
\]
since $(p_N^*\nabla^{\cS_N})_{p_N^*v}$ vanishes on sections pulled back from $G/K$, while $X$ vanishes on sections pulled back from $N$.
\end{proof}

\subsection{Proof of Proposition \ref{prop Dirac M N}}

Using Lemma \ref{lem decomp nabla SM}, we can prove Proposition \ref{prop Dirac M N}. One ingredient of the proof is the following expression for the operator $p_N^*D_N$.
\begin{lemma} \label{lem pNDN}
If $\{e_1, \ldots, e_l\}$ is a local orthonormal frame for $TN$, then locally,
\begin{equation} \label{eq pNDN}
p_N^*D_N = \sum_{s=1}^l c(p_N^*e_s)(p_N^*\nabla^{\cS^N})_{p_N^*e_s}.
\end{equation}
\end{lemma}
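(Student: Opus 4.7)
The plan is to prove this by unpacking the definition of the pullback Dirac operator $p_N^*D_N$ against the local formula for $D_N$ itself, and checking that both sides agree pointwise on $G \times N$. The content of the lemma is essentially notational: the vector fields $p_N^*e_s$ have no component in the $G$-direction, so every piece of the Dirac operator acts only in the $N$-direction at each fixed $g \in G$.

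First I would recall that, by the very definition of $D_N$ as the Dirac operator associated to $\nabla^{\cS_N}$, one has locally
\[
D_N = \sum_{s=1}^l c_N(e_s) \nabla^{\cS_N}_{e_s}.
\]
Next I would fix a local frame $\{\sigma_j\}$ of $\cS_N$ over the open set $U \subset N$ where the $e_s$ are defined, and write an arbitrary section $s \in \Gamma^{\infty}(G \times U, p_N^*\cS_N)$ as $s(g,n) = \sum_j f_j(g,n) \sigma_j(n)$, with $f_j \in C^{\infty}(G \times U)$. The key computation is then to verify, from the defining properties of the pullback connection (Leibniz rule, plus $(p_N^*\nabla^{\cS_N})_{p_N^*e_s}(p_N^*\sigma_j) = p_N^*(\nabla^{\cS_N}_{e_s}\sigma_j)$ since $dp_N(p_N^*e_s) = e_s$), that
\[
\bigl((p_N^*\nabla^{\cS_N})_{p_N^*e_s} s\bigr)(g,n) = \nabla^{\cS_N}_{e_s}\bigl(s(g,\relbar)\bigr)(n).
\]
This follows because $(p_N^*e_s)(f_j)(g,n) = e_s\bigl(f_j(g,\relbar)\bigr)(n)$, and both sides of the asserted identity expand into the same sum of coefficient derivatives and connection coefficients.

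After that, I would note that the Clifford action $c(p_N^*e_s)$ on the fiber $(p_N^*\cS_N)_{(g,n)} = (\cS_N)_n$ is simply $c_N(e_s(n))$, since the Clifford bundle structure is pulled back fiberwise. Summing over $s$, one obtains
\[
\sum_{s=1}^l c(p_N^*e_s)(p_N^*\nabla^{\cS_N})_{p_N^*e_s} s\,(g,n) = \sum_{s=1}^l c_N(e_s(n)) \nabla^{\cS_N}_{e_s}\bigl(s(g,\relbar)\bigr)(n) = D_N\bigl(s(g,\relbar)\bigr)(n),
\]
which by the definition $(p_N^*D_N s)(g,n) = D_N(s(g,\relbar))(n)$ is precisely $(p_N^*D_N s)(g,n)$, proving \eqref{eq pNDN}.

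There is no real obstacle here; the only thing to watch is that the pullback connection is correctly interpreted on sections that are not themselves pullbacks, which is handled by the Leibniz rule and the local frame decomposition above. The reason the argument works so cleanly is that $p_N^*e_s$ is tangent only to the $N$-factor, so no extra cross-terms involving differentiation in the $G$-direction appear.
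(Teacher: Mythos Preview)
Your proposal is correct and follows essentially the same approach as the paper: both arguments reduce to sections that are sums of smooth functions times pulled-back sections of $\cS_N$, apply the Leibniz rule for the pullback connection together with the fact that $(p_N^*e_s)$ differentiates only in the $N$-direction, and then identify the result with $D_N$ applied to the slice $s(g,\relbar)$. The only cosmetic difference is that you fix a local frame $\{\sigma_j\}$ while the paper works with a single generic product $\varphi\, p_N^*s_N$, but the content is identical.
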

\begin{proof}
Note that any section of $\Gamma^{\infty}(p_N^*\cS_N)$ is a sum of sections of the form $\varphi p_N^*s_N$, for $\varphi \in C^{\infty}(G \times N)$ and $s_N \in \Gamma^{\infty}(\cS_N)$. On such a section, one has 
\begin{equation}
(p_N^*\nabla^{\cS^N})_{p_N^*e_s} (\varphi p_N^*s_N) 
	=  \varphi  p_N^*\bigl(\nabla^{\cS_N}_{p_N^* e_s} s_N \bigr) + {(p_N^* e_s)} (\varphi) p_N^*s_N. \label{eq nab SM 2}
\end{equation}
At a point $(g, n) \in G\times N$, one has
\[
(p_N^* e_s)(\varphi)(g, n) =  e_s\bigl( \varphi(g, \relbar) \bigr)(n). 
\]
Therefore, at such a point, we find that \eqref{eq nab SM 2} equals
\[
\left(   \nabla^{\cS_N}_{e_s} \varphi(g, \relbar) s_N \right)(n). 
\]
We conclude that, at $(g, n)$,  the right hand side of \eqref{eq pNDN} applied to $\varphi p_N^*s_N$ yields
\[
\begin{split}
\left(\sum_{s=1}^l c(p_N^* e_s)\nabla^{\cS_N}_{p_N^* e_s} (\varphi p_N^*s_N) \right)(g, n)&= \sum_{s=1}^l  \left(c(e_s)\left(   \nabla^{\cS_N}_{e_s} \varphi(g, \relbar) s_N \right)\right)(n) \\
	&= \bigl((p_N^*D_N)(\varphi p_N^*s_N)\bigr) (g, n).
\end{split}
\]
\end{proof}

\medskip
\noindent
\emph{Proof of Proposition \ref{prop Dirac M N}.}
Let $\{X_1, \ldots, X_k\}$ be an orthonormal basis of $\kp$, and let $\{e_1, \ldots, e_l\}$ be a local orthonormal frame for $TN$. Then, because the Riemannian metric on $TM$ is a product metric,
\begin{equation} \label{eq DM}
\DM = \sum_{r=1}^k c(X_r)\nabla^{\cSM}_{X_r^M} + \sum_{s=1}^l c(p_N^*e_s)\nabla^{\cSM}_{p_N^*e_s}.
\end{equation}
Note that for each $r$ and $s$, $c(X_r)$ acts on $\pi_{\kp}$, and $c(p_N^*e_s)$ acts on $\cS_N$ in $\cSM = G\times_K(\cS_N \otimes \pi_{\kp})$, via \eqref{eq decomp c}. 

By Lemma \ref{lem decomp nabla SM} and \eqref{eq decomp c}, the first term on the right hand side of  \eqref{eq DM} equals
\[
\sum_{r=1}^k c(X_r)\cL_{X_r} = \varepsilon D_{\kp}.
\]
The same lemma implies that the second term 
 equals
\[
 \sum_{s=1}^l c(p_N^*e_s)(p_N^*\nabla^{\cS_N})_{p_N^*e_s},
\]
which by Lemma \ref{lem pNDN} equals $p_N^*D_N$.
\hfill $\square$


\section{Induction}\label{sec induction}

We prove  two induction results, Propositions \ref{prop ind ds} and \ref{prop [Q,Ind]=0 non-cocpt}, by using Proposition \ref{prop Dirac M N 1} and keeping track of the $L^2$-norms on the various spaces involved. These induction results are then used to prove the results in Section \ref{sec prelim}.
To compare $L^2$-norms, we use a relation between the Riemannian densities on $M$, $N$ and $G$.

In this section, initially $G$ can be any Lie group, with a fixed Haar measure $dg$ and maximal compact subgroup $K$. We still assume that the Riemannian metric on $M$ is a product metric.




\subsection{Densities}

Recall that by assumption,  the Riemannian metric on $M = G\times_K N$ is induced by the given inner product on $\kp$ and a $K$-invariant Riemannian metric on $N$. Let  $dm$  and $dn$ be the densities on $M$ and $N$ defined by these Riemannian metrics. 
 Let $dk$ be the Haar measure on $dk$ giving $K$ unit volume.
We will prove and use the fact that $dm$ equals the measure $d[g, n]$ on $G\times_K N$ induced by the product measure $dg \times dn$ on $G\times N$, via the equality
\[
\int_{G\times N} \varphi(g, n) \, dg \, dn = \int_{G\times_K N} \int_K \varphi(k\cdot \tau[g, n]) \, dk\, d[g,n]
\]
for any $\varphi \in C_c(G\times N)$ and any Borel section $\tau\colon G\times_K N \to G \times N$. (See e.g. \cite{Bourbaki}, Chapter 7, Section 2, Proposition 4b.)

\begin{lemma} \label{lem dm}
Under the diffeomorphism $G\times_K N = M$ defined by the action,
and for a suitable scaling of the Haar measure $dg$, the measure $d[g, n]$ corresponds to $dm$. 
\end{lemma}
\begin{proof}
Consider the non-equivariant diffeomorphisms
\[
\begin{split}
\Psi_{M}\colon &\kp \times N \to M; \\
\Psi_{G \times N}\colon &\kp \times K \times N \to G \times N,
\end{split}
\]
defined by
\[
\begin{split}
\Psi_{M}(X, n) &= \exp(X)n; \\
\Psi_{G \times N}(X, k, n) &= (\exp(X)k^{-1}, kn),
\end{split}
\]
for $X \in \kp$, $n \in N$ and $k \in K$.

Let $dX$ be the Riemannian density on $\kp$. 
Then, since $\Psi_M$ is an isometry, 
\begin{equation} \label{eq Psi M}
\Psi_M^*dm = dX \otimes dn.
\end{equation}
Now let the Haar measure $dg$ be given by the $G$-invariant Riemannian metric induced by the inner product on $\kg$. Let $dk$ be the Haar measure on $K$ defined in the same way. By rescaling the inner product on $\kg$, we can make sure that $dk$ gives $K$ unit volume. By Lemma \ref{lem Psi G N} below, we have
\begin{equation} \label{eq Psi G N}
\Psi_{G \times N}^*(dg \otimes dn) = dX \otimes dk \otimes dn.
\end{equation}

The equalities \eqref{eq Psi M} and \eqref{eq Psi G N} imply that for all $\varphi \in C_c(M)$,
\[
\begin{split}
\int_M \varphi(m) dm &= \int_{\kp \times N} \varphi(\exp(X)n) dX \otimes dn \\
	&= \int_{\kp \times K \times N} \varphi(\exp(X)n) dX \otimes dk \otimes dn \\
	&= \int_{G\times N} \varphi(gn) dg\otimes dn \\
	&= \int_{G\times_K N} \varphi(gn) d[g, n],
\end{split}
\]
where we used the fact that the map $(g, n) \mapsto gn$ is invariant under the $K$-action given by $k\cdot (g, n) = (gk^{-1}, kn)$.
\end{proof}

\begin{lemma} \label{lem Psi G N}
In the notation of the proof of Lemma \ref{lem dm}, we have
\[
\Psi_{G \times N}^*(dg \otimes dn) = dX \otimes dk \otimes dn.
\]
\end{lemma}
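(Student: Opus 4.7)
The strategy is to split $\Psi_{G\times N}$ into a map that handles the $K$-twist in the $N$-factor and a map that implements the Cartan decomposition on $G$. Write $\Psi_{G\times N} = \iota \circ \Phi$, where $\Phi: \kp \times K \times N \to \kp \times K \times N$, $\Phi(X, k, n) := (X, k, kn)$, and $\iota: \kp \times K \times N \to G \times N$, $\iota(X, k, n) := (\exp(X) k^{-1}, n)$. Since the $K$-action on $N$ is by isometries of the Riemannian metric defining $dn$, for each fixed $(X, k)$ the diffeomorphism $n \mapsto kn$ preserves $dn$. Fubini's theorem then yields $\Phi^{*}(dX \otimes dk \otimes dn) = dX \otimes dk \otimes dn$. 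Because $\iota$ leaves the $N$-coordinate unchanged, the claim reduces to showing
\[
\Psi^{*} dg = dX \otimes dk \qquad \text{for } \Psi: \kp \times K \to G,\ \Psi(X, k) := \exp(X) k^{-1}.
\]

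For this remaining identity I would make use of the Cartan decomposition $G = \exp(\kp) K$, which says that $\Psi$ is a diffeomorphism. First, composing with the inversion $k \mapsto k^{-1}$ (an isometry of $K$ for the bi-invariant Riemannian metric coming from the $\Ad(K)$-invariant inner product on $\kk$, hence preserving $dk$) replaces $\Psi$ by $\Psi_{+}: (X, k) \mapsto \exp(X) k$, so it suffices to show $\Psi_{+}^{*} dg = dX \otimes dk$. A direct computation of the differential of $\Psi_{+}$ at $(X_{0}, k_{0})$ in left-trivialised coordinates gives
\[
(Y, B) \in \kp \oplus \kk \;\longmapsto\; \Ad(k_{0}^{-1})\bigl(\phi(X_{0})(Y)\bigr) + B \in \kg,
\]
where $\phi(X) := (1 - e^{-\operatorname{ad} X})/\operatorname{ad} X$. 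Because $\Ad(K)$ preserves the orthogonal decomposition $\kg = \kp \oplus \kk$ and is orthogonal with respect to the given inner product, the Jacobian of this differential reduces to the determinant of the $\kp \to \kp$ block of $\phi(X_{0})$.

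The main obstacle is that this $\kp$-block Jacobian is not identically equal to $1$; handling it is precisely what the ``suitable scaling'' of the Haar measure $dg$ in Lemma \ref{lem dm} is arranged to achieve. At $(X_{0}, k_{0}) = (0, e)$ the formula above collapses to $(Y, B) \mapsto Y + B$, whose absolute determinant equals $1$ for the splitting $\kg = \kp \oplus \kk$, so the identity $\Psi_{+}^{*} dg = dX \otimes dk$ holds at the identity of $\kp \times K$; the content of the lemma is that the normalisation of $dg$ fixed in Lemma \ref{lem dm} is the one under which this identity extends, via left-invariance of $dg$ and the $K$-bi-invariance of $dk$, to all of $\kp \times K$.
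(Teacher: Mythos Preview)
Your decomposition $\Psi_{G\times N}=\iota\circ\Phi$ is clean, and the reduction to showing $\Psi_+^*dg=dX\otimes dk$ for $\Psi_+(X,k)=\exp(X)k$ is correct, as is your formula for the left-trivialised differential $(Y,B)\mapsto\Ad(k_0^{-1})\bigl(\phi(X_0)(Y)\bigr)+B$. The gap is in the final paragraph. A ``suitable scaling'' of $dg$ is multiplication by a \emph{constant}; it cannot absorb the factor $J(X_0):=\bigl|\det\bigl(\mathrm{pr}_{\kp}\circ\phi(X_0)|_{\kp}\bigr)\bigr|$, which genuinely varies with $X_0$ (for instance, when $[\kp,\kp]\subset\kk$ one finds $J(X_0)=\prod_\mu\sinh(\sqrt\mu)/\sqrt\mu$ over the eigenvalues $\mu\ge 0$ of $(\operatorname{ad}X_0)^2|_\kp$, which is $>1$ whenever $\operatorname{ad}X_0\neq 0$). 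Left-invariance of $dg$ and bi-invariance of $dk$ are already built into your Jacobian computation --- they are precisely what make the answer depend only on $X_0$ and not on $k_0$ --- so invoking them again cannot force $J\equiv 1$. As written, the argument does not close.

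The paper proceeds differently: it writes down $T_{(X,k,n)}\Psi_{G\times N}$ in one step and factors it as $A\circ B$, with $A$ a Riemannian isometry (assembled from $T_el_{\exp(X)k^{-1}}$, $\Ad(k)$ and $T_nk$) and $B$ block-unipotent of determinant one; this handles the $N$-coupling and the $G$-part simultaneously rather than via your two-stage reduction. You have, however, located a real subtlety: the paper's stated expression for the $G$-component of the differential records $\Ad(k)(Y)$ rather than $\Ad(k)\bigl(\phi(X)(Y)\bigr)$, so its isometry-times-unipotent factorisation as written skips exactly the Jacobian you found. What saves Lemma~\ref{lem dm} (the only place Lemma~\ref{lem Psi G N} is used) is that the companion identity $\Psi_M^*dm=dX\otimes dn$ acquires the \emph{same} factor $J(X)$ --- the differential of $\Psi_M$ at $(X_0,n_0)$ is $T_{n_0}(\exp X_0)$ composed with $(Y,v)\mapsto(\phi(X_0)(Y))^M_{n_0}+v$, whose determinant is again $J(X_0)$ --- and the two factors cancel in the chain of integrals there. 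The cleanest repair is therefore not to try to make $J\equiv 1$, but to carry $J(X)$ explicitly through both $\Psi_M$ and $\Psi_{G\times N}$ and observe the cancellation in the proof of Lemma~\ref{lem dm}.
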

\begin{proof}
One can compute that for all $X, Y \in \kp$, $Z \in \kk$, $k \in K$, $n \in N$ and $v \in T_nN$,
\[
T_{(X, k, n)} \Psi_{G\times N}(Y, T_el_k(Z), v) = \bigl( T_el_{\exp(X)k^{-1}}\bigl( \Ad(k)(Y + Z)\bigr), T_n k\bigl(\alpha_n(Z) + v \bigr) \bigr).
\]
Here the letter $l$ denotes left multiplication, and for $m \in M$, the map $\alpha_m\colon \kg \to T_mM$ is given by the infinitesimal action. Now the maps $T_el_{\exp(X)k^{-1}}$, $\Ad(k)$ and $T_nk$ preserve the Riemannian metrics on $TG$ and $TN$. So
\[
T_{(X, k, n)} \Psi_{G\times N} = B\circ A,
\]
where
\[
A\colon T_{(X, k, n)}(\kp \times K \times N) \to T_{(\exp(X)k^{-1}, kn)}(G\times N),
\]
given by
\[
A(Y, T_el_k(Z), v) = \bigl( T_el_{\exp(X)k^{-1}}\bigl( \Ad(k)(Y + Z)\bigr), T_nk( v) \bigr) 
\]
is an isometry, and the automorphism $B$ of
\[
T_{(\exp(X)k^{-1}, kn)}(G\times N) \cong \kp\oplus \kk \oplus T_{kn}N
\]
is given by the matrix
\[
\mat(B) = \left( \begin{array}{ccc}I_{\kp} & 0 & 0 \\ 0 & I_{\kk} & 0 \\ 0 & T_nk \circ \alpha_n & I_{T_{kn}N}\end{array}\right),
\]
where $I_{\kk}$, $I_{\kp}$ and $I_{T_{kn}N}$ are the identity maps on the respective spaces, 
so that $B$ has determinant one.

Since the map $A$ is an isometry, it relates the Riemannian density $dX \otimes dk \otimes dn$ on $\kp \times K \times N$ to the Riemannian density $dg\otimes dn$ on $G\times N$, at the point $(X, k, n)$. Since the map $B$ has unit determinant, it does not change densities, so the claim follows.
\end{proof}


\begin{lemma} \label{lem L2 S}
In the notation of Proposition \ref{prop Dirac M N 1}, we have
\[
\|\sigma(s\otimes \varphi)\|_{L^2(\cS)} = \|s\|_{L^2(\cS_N)} \|\varphi\|_{L^2(G)\otimes  \pi_{\kp}},
\]
for all $s \in \Gamma^{\infty}_c(\cS_N)$ and $\varphi \in C^{\infty}_c(G)\otimes \pi_{\kp}$ such that $s\otimes \varphi$ is $K$-invariant.
\end{lemma}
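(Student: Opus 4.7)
The strategy is a direct computation: unpack the $L^2$-norm on $\cSM$ via the diffeomorphism $\alpha: G\times_K N \to M$ using the density formula of Lemma \ref{lem dm}, then use the $K$-invariance assumption to pass to $G\times N$ and apply Fubini.

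First I would compute the pointwise norm of $\sigma(s\otimes\varphi)$. At $m = gn \in M$, the fiber $(\cSM)_{gn}$ is identified with $(\cS_N)_n \otimes \Delta_{\kp}$ via $[g, s_n\otimes\delta] \leftrightarrow s_n\otimes\delta$, and this identification is an isometry because the Hermitian metrics on $\cS_N$ and $\Delta_{\kp}$ are $K$-invariant (so that the quotient metric on $G\times_K(\cS_N\otimes\Delta_{\kp})$ is well-defined by the representative). Hence
\[
|\sigma(s\otimes\varphi)(gn)|^2_{(\cSM)_{gn}} = |s(n)|^2_{(\cS_N)_n} \cdot |\varphi(g)|^2_{\Delta_{\kp}}.
\]

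Next I would use Lemma \ref{lem dm} to rewrite
\[
\|\sigma(s\otimes\varphi)\|^2_{L^2(\cSM)} = \int_M |\sigma(s\otimes\varphi)(m)|^2\, dm = \int_{G\times_K N} |s(n)|^2 |\varphi(g)|^2\, d[g,n].
\]
The integrand $(g,n) \mapsto |s(n)|^2|\varphi(g)|^2$ equals the pointwise norm squared of $s\otimes\varphi$ on $G\times N$, which is $K$-invariant by hypothesis (since the pointwise norm squared of a $K$-invariant section is a $K$-invariant function). Using the description of $d[g,n]$ recalled just before Lemma \ref{lem dm}, together with $\int_K dk = 1$, the integral of a $K$-invariant, compactly supported function on $G\times N$ against $dg\otimes dn$ equals its integral on $G\times_K N$ against $d[g,n]$. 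Therefore
\[
\int_{G\times_K N} |s(n)|^2 |\varphi(g)|^2\, d[g,n] = \int_{G\times N} |s(n)|^2|\varphi(g)|^2\, dg\, dn.
\]

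Finally, Fubini's theorem gives
\[
\int_{G\times N} |s(n)|^2|\varphi(g)|^2\, dg\, dn = \|s\|^2_{L^2(\cS_N)}\cdot \|\varphi\|^2_{L^2(G,\Delta_{\kp})},
\]
and taking square roots yields the claim. I do not anticipate a substantive obstacle; the only point requiring care is the identification of the fiberwise norm on $\cSM = G\times_K(\cS_N\otimes\Delta_{\kp})$ with the norm on $(\cS_N)_n\otimes\Delta_{\kp}$ of a representative, which is valid precisely because the metrics on $\cS_N$ and $\Delta_{\kp}$ are $K$-invariant.
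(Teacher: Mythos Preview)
Your proposal is correct and follows essentially the same route as the paper: use Lemma \ref{lem dm} to write $\|\sigma(s\otimes\varphi)\|_{L^2(\cSM)}^2$ as an integral over $G\times_K N$, invoke $K$-invariance of $s\otimes\varphi$ and of the fibrewise norm to pass to $G\times N$, and then separate the variables by Fubini. The only cosmetic difference is that you make the fibrewise isometry $(\cSM)_{gn}\cong(\cS_N)_n\otimes\Delta_{\kp}$ explicit, whereas the paper absorbs this into the phrase ``$K$-invariance of the norm on $\cS$''.
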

\begin{proof}
By Lemma \ref{lem dm} and $K$-invariance of $s\otimes \varphi$ and of the norm on $\cS$, and implicitly using a Borel section $G\times_K N \to G\times N$, one has
\[
\begin{split}
\|\sigma(s\otimes \varphi)\|_{L^2(\cS)}^2 &= \int_{G\times_K N} \| s(n) \otimes \varphi(g) \|_{\cS}^2\,  d[g, n] \\
&= \int_{G\times_K N} \int_K \| s(n) \otimes \varphi(g) \|_{\cS}^2 \, dk\, d[g, n] \\
&= \int_{G\times N} \| s(n) \|_{\cS_N}^2 \| \varphi(g) \|_{\pi_{\kp}}^2 \, dg \, dn \\
&= \|s\|_{L^2(\cS_N)}^2 \|\varphi\|_{L^2(G)\otimes  \pi_{\kp}}^2.
\end{split}
\]
\end{proof}

\subsection{Deformed Dirac operators}

Now suppose $G/K$ is even-dimensional and equivariantly $\Spin$.
Consider a real-valued function $f\in C^{\infty}(M)^G = C^{\infty}(N)^K$,
and  the deformed Dirac operator
\[
D_{fv_N} := D_N + ic_N(fv_N).
\]
\begin{proposition} \label{prop ker deformed Dirac}
The map $\sigma$ defines a $G$-equivariant, graded, unitary isomorphism
\[
\ker_{L^2} D_{fv}\cong \bigl( \ker_{L^2} (D_{fv_N}) \otimes \ker_{L^2} (D_{G/K}) \bigr)^K
\]
(Here the tensor product is completed in the $L^2$-inner product.)
\end{proposition}
\begin{proof}
Since the algebraic tensor product
\[
\Gamma^{\infty}_c(\cS_N) \otimes C^{\infty}_c(G)\otimes \pi_{\kp}
\]
is dense in 
\[
L^2(\cS_N) \otimes L^2(G)\otimes  \pi_{\kp},
\]
Proposition \ref{prop Dirac M N 1} and Lemma \ref{lem L2 S} imply that $\sigma$ induces a unitary isomorphism
\[
L^2(\cS) \cong \bigl( L^2(\cS_N) \otimes L^2(G)\otimes  \pi_{\kp} \bigr)^K.
\]
By Proposition \ref{prop Dirac M N 1}, this isomorphism intertwines the operators $D_M$ and $D_{N} \otimes 1 +  \varepsilon\otimes D_{G/K}$. Since it also intertwines $c(v)$ and $c(v_N) \otimes 1$, it intertwines $D_{fv}$ and $D_{fv_N} \otimes 1 +  \varepsilon\otimes D_{G/K}$.

As in the proof of Theorem 3.5 in \cite{Atiyah}, the presence of the grading operator $\varepsilon$ in \eqref{eq DM DN DGK} implies that
\[
\bigl( D_{fv_N} \otimes 1 + \varepsilon\otimes D_{G/K} \bigr)^2 = D_{fv_N}^2 \otimes 1 + 1\otimes D_{G/K}^2.
\]
Since the operators $D_{fv_N}$ and $D_{G/K}$ are 
symmetric, we find that
\[
\ker_{L^2} D_{fv} \cong \bigl( \ker_{L^2} (D_{fv_N}) \otimes \ker_{L^2} (D_{G/K}) \bigr)^K.
\]
Since the isomorphism is compatible with the gradings, the claim follows. 
\end{proof}

Proposition \ref{prop ker deformed Dirac} holds at the level of kernels. To prove the results in Section \ref{sec prelim}, we only need the corresponding weaker result about indices. 
Suppose $G$ is semisimple with discrete series.
Consider the Dirac induction map \eqref{eq def Dirac ind}. Note that for $(\lambda \in \Lambda_+ + \rho_K)^{\reg}$, we have
\[
\widehat{\DInd}_K^G(\pi^K_{\lambda}) = \pi^{\ds}_{\lambda}.
\] 
The following induction result for indices follows directly from Proposition \ref{prop ker deformed Dirac} and Theorem \ref{thm AS Parth}.
\begin{proposition}\label{prop ind ds}
In the setting of Proposition \ref{prop def Dirac ds Fredholm}, we have
\[
\ind_{\ds}D_{fv} = \widehat{\DInd}_K^G (\ind_K D_{fv_N}).
\]
\end{proposition}

\subsection{Invariant parts}

In this subsection, $G$ is unimodular, and $G/K$ is even-dimensional and equivariantly $\Spin$.
We now consider $G$-invariant, transversally $L^2$ sections of $\cSM$, to prove Proposition \ref{prop [Q,Ind]=0 non-cocpt}. 
%
\begin{lemma}
Restriction to $N$ is a  linear isomorphism
\[
\Gamma^{\infty}(\cSM)^G \xrightarrow{\cong} \bigl(\Gamma^{\infty}(\cS_N ) \otimes \pi_{\kp}\bigr)^K.
\] 
\end{lemma}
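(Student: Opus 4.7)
The strategy is to exploit the identification $\cSM \cong G\times_K(\cS_N \otimes \Delta_{\kp})$ from \eqref{eq decomp S}. Smooth sections of an associated bundle $G\times_K E$, where $E$ is a $K$-equivariant vector bundle on $N$, correspond bijectively to smooth $K$-equivariant sections $\tilde s$ of the pullback $p_N^*E \to G\times N$, via the relation $s([g,n]) = [g, \tilde s(g,n)]$. So I would start by writing an arbitrary $s \in \Gamma^{\infty}(\cSM)$ in this form, with $\tilde s: G\times N \to \cS_N \otimes \Delta_{\kp}$ satisfying $\tilde s(g,n) \in (\cS_N)_n \otimes \Delta_{\kp}$ and $\tilde s(gk^{-1}, kn) = k\cdot \tilde s(g,n)$ for $k \in K$.

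The first substantive step is to translate $G$-invariance of $s$ into a condition on $\tilde s$. Using the $G$-action $g'\cdot[g,n] = [g'g, n]$, a direct computation gives $(g'\cdot s)([g,n]) = [g, \tilde s((g')^{-1}g, n)]$; since the equivalence defining $G\times_K(\cS_N \otimes \Delta_{\kp})$ is only by $K$, this forces $\tilde s(g,n)$ to be independent of $g\in G$ as soon as $s$ is $G$-invariant. Setting $s_N(n) := \tilde s(e,n)$, the surviving $K$-equivariance of $\tilde s$ reduces to $s_N(kn) = k\cdot s_N(n)$, i.e.\ $s_N \in \Gamma^{\infty}(\cS_N\otimes \underline{\Delta_{\kp}})^K = (\Gamma^{\infty}(\cS_N)\otimes \Delta_{\kp})^K$, where the tensor-product description uses that $\Delta_{\kp}$ gives a trivial bundle over $N$.

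The second step is to recognise this map as restriction to $N$: the embedding $N\hookrightarrow M$, $n\mapsto [e,n]$, identifies $\cSM_{[e,n]}$ with $(\cS_N)_n \otimes \Delta_{\kp}$ via $v\mapsto [e,v]$, and under this identification $s|_N(n) = \tilde s(e,n) = s_N(n)$. Finally, to check it is an isomorphism I would exhibit the inverse directly: given $s_N \in (\Gamma^{\infty}(\cS_N)\otimes \Delta_{\kp})^K$, the formula $s([g,n]) := [g, s_N(n)]$ is well-defined on the quotient $G\times_K N$ precisely by the $K$-equivariance of $s_N$, is smooth by the local trivialisations of the associated bundle, and is $G$-invariant by construction; its restriction to $N$ recovers $s_N$. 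There is essentially no analytic or geometric obstacle here; the only point requiring care is keeping the two $K$-actions (the one on $G\times N$ entering the associated bundle construction and the one on $\cS_N \otimes \Delta_{\kp}$) consistent throughout, so that the well-definedness of the inverse on $K$-orbits really coincides with $K$-equivariance of $s_N$.
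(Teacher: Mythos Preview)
Your proposal is correct and follows essentially the same approach as the paper: both arguments use the identification $\cSM_{[e,n]} \cong (\cS_N)_n \otimes \Delta_{\kp}$ to see that restriction lands in $K$-invariant sections, and both exhibit the inverse as the unique $G$-invariant extension $s([g,n]) = [g, s_N(n)]$. The paper's proof is simply a two-sentence sketch of what you have written out in full.
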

\begin{proof}
Note that for all $s \in \Gamma^{\infty}(\cSM)^G$ and $n \in N$, we have $s(n) \in \cSM_{n}\cong (\cS_N)_n \otimes \pi_{\kp}$. Every $K$-invariant section in $\bigl(\Gamma^{\infty}(\cS_N ) \otimes \pi_{\kp}\bigr)^K$ has a unique $G$-invariant extension to a section in  $\Gamma^{\infty}(\cSM)^G$. This is the inverse to the restriction map.
\end{proof}

Fix $s \in \Gamma^{\infty}(\cSM)^G$. 
Let $h_G \in C^{\infty}(G)^K$ be such that
\[
\int_G h_G(g)^2 \, dg = 1
\]
for the Haar measure $dg$ as in Lemma \ref{lem dm}. Here the superscript $K$ denotes invariance under right multiplication by $K$. Define the cutoff function $h \in C^{\infty}(M)$ by
\[
h(gn) = h_G(g),
\]
for $g \in G$ and $n \in N$. 

The characterisation of the density $dm$ in Lemma \ref{lem dm} allows us to relate transversally $L^2$ sections on $M$ to $L^2$-sections on $N$.
\begin{lemma}\label{lem L2T}
We have
\[
\| hs \|_{L^2(\cSM)} = \|s|_N\|_{L^2(\cS_N) \otimes \pi_{\kp}}.
\]
\end{lemma}
\begin{proof}
By Lemma \ref{lem dm}, we have
\[
\begin{split}
\| hs \|_{L^2(\cSM)}^2 &= \int_M h(m)^2 \|s(m)\|^2_{\cS} \, dm \\
	&= \int_{G\times_K N} h_G(g)^2 \|g^{-1}(s(n))\|_{\cSM}^2 \, d[g, n] \\
	&=  \int_{G\times N} h_G(g)^2 \|s(n)\|_{\cS_N \otimes \pi_{\kp}}^2 \, dg\,  dn \\
	&= \|s|_N\|_{L^2(\cS_N) \otimes \pi_{\kp}}^2,
\end{split}
\]
where we have used $G$-invariance of the metric $\|\cdot \|_{\cSM}$ and $K$-invariance of $s|_N$.
\end{proof}

Unimodularity of $G$ implies that the definition of the space $L^2_T(\cSM)^G$ is independent of the cutoff function chosen. Lemma \ref{lem L2T} has the following consequence.
\begin{lemma} \label{lem L2T M L2 N}
Restriction to $N$ is a graded unitary isomorphism
\[
L^2_T(\cSM)^G \cong \bigl( L^2(\cS_N) \otimes \pi_{\kp}\bigr)^K.
\]
\end{lemma}


In Proposition \ref{prop Dirac M N 1}, the operator $D_{G/K}$ is zero on $G$-invariant sections. It therefore has the following consequence.
\begin{lemma} \label{lem Dirac M N invar}
One has
\[
(\DM s)|_N =  (D_N \otimes 1_{\pi_{\kp}}) (s|_N).
\]
\end{lemma}


Because of \eqref{eq mu mN}, we have
$v_N = v|_N$.
Therefore,
\begin{equation} \label{eq cvM cvN}
\bigl(c(v)s\bigr)|_N = c_N(v_N)s|_N
\end{equation}
Lemmas \ref{lem L2T M L2 N} and  \ref{lem Dirac M N invar}, together with \eqref{eq cvM cvN}, yield the following conclusion.
\begin{proposition} \label{prop L2T ker}
We have a graded linear isomorphism
\[
\ker_{L^2_T}^G (D_{f\vM}) \cong \bigl(\ker_{L^2}(D_{fv_N})\otimes \pi_{\kp} \bigr)^K.
\]
\end{proposition}

Using Proposition \ref{prop L2T ker} and the fact that $\pi_{\kp}\cong \pi_{\kp}^*$, we obtain the desired induction result.
\begin{proposition}\label{prop [Q,Ind]=0 non-cocpt}
We have
\[
\ind_{L^2_T}^G(D_{fv}) =   \bigl[ \ind_{K}(D_{fv_N}): \pi_{\kp} \bigr] \quad \in \Z.
\]
\end{proposition}
We have so far assumed that  the Riemannian metric on $M$ is a product metric in this section. However,  the invariant index is independent of the (complete, $G$-invariant) Riemannian metric by cobordism invariance, Theorem 3.6 in \cite{Braverman}. Furthermore, any Riemannian manifold with a complete, $G$-invariant Riemannian metric has a complete product metric by Lemma 3.12 in \cite{HS3}. Therefore, Proposition \ref{prop [Q,Ind]=0 non-cocpt} holds for \emph{any} complete, $G$-invariant Riemannian metric on $TM$.


\subsection{Proofs of the results} \label{sec proofs}

Let us prove the results in Section \ref{sec prelim}. Proposition \ref{prop def Dirac ds Fredholm} follows directly from Theorem \ref{thm AS Parth}, Proposition \ref{prop ker deformed Dirac} and well-definedness of the index \eqref{eq index K} for compact groups.

Corollary 3.8 in \cite{HS3} implies that
\beq{eq ind Khom}
\ind_G(D_{fv}) = \ind_K(D_{fv_N})\otimes \pi_{\kp} \quad \in \hat R(K).
\eeq
Hence Proposition \ref{prop invar index} follows from Proposition \ref{prop [Q,Ind]=0 non-cocpt}:
\[
\ind^G_{L^2_T}(D_{fv}) =\dim \bigl(\ind_K D_{fv_N}\otimes \pi_{\kp}\bigr)^K = \dim\bigl(\ind_G(D_{fv})^K \bigr).
\]
In the same way,
Proposition \ref{prop G ds ind} follows from \eqref{eq ind Khom} and
 Proposition \ref{prop ind ds}.

To prove
Proposition \ref{prop ass map}, we note that by (5.3) in \cite{HochsPS},
\[
\DInd_K^G(\pi^K_{\lambda}) = (-1)^{\dim (G/K)/2}j(\pi^{\ds}_{\lambda})\in K_0(C^*_rG).
\]
Here $\DInd_K^G$ is the $K$-theoretical Dirac induction map from the Connes--Kasparov conjecture (see Conjecture 4.20 in \cite{BCH}).
Therefore, by the induction result for the analytic assembly map, Theorem 5.8 in \cite{HM2}, we have
\[
\begin{split}
p_{\ds}(\mu_M^G[D]) &= p_{\ds} \circ \DInd_K^G (\ind_K D_N)\\
	&=(-1)^{\dim (G/K)/2} \bigoplus_{\lambda \in (\Lambda_+ + \rho_K)^{\reg}} [\ind_K(D): \pi^K_{\lambda}] j(\pi^{\ds}_{\lambda}) \\
	&= (-1)^{\dim (G/K)/2}j\bigl(\widehat{\DInd}_K^G (\ind_K(D_N))\bigr) \\
	&=  (-1)^{\dim (G/K)/2} j(\ind_{\ds} (D)),
\end{split}
\]
by Proposition \ref{prop ind ds}.

Theorem \ref{thm [Q,R]=0} follows from the corresponding result for compact groups, Theorem 3.10 in \cite{HS}, via Proposition \ref{prop ind ds}. Here we also use the fact that $M_{\xi} = N_{\xi}$ for all $\xi \in \kk^*$, if $G$ is reductive (see Proposition 3.13 in \cite{HM2}).
In a similar way, we can use Proposition \ref{prop [Q,Ind]=0 non-cocpt} to deduce Theorem \ref{thm [Q,R0]=0} from Theorem 3.9 in \cite{HS}. In that case, we do not need to assume that the Riemannian metric on $TM$ is a product metric.

Finally, Theorem \ref{thm AH} follows from Atiyah and Hirzebruch's result in \cite{AH} via Propositions \ref{prop ind ds} and  \ref{prop [Q,Ind]=0 non-cocpt}. Indeed, the condition that the action is properly nontrivial is equivalent to the action by $K$ on $N$ being nontrivial, see Lemma 9 in \cite{HMAH}. Also, the $\Spinc$-structure $P_N$ is now a $\Spin$-structure, see Lemma 10 in \cite{HMAH}.


%

\end{document}